\documentclass[11pt,reqno,letterpaper]{amsart}
\usepackage{color}
\usepackage[colorlinks=true, allcolors=blue,backref=page]{hyperref}
\usepackage{amsmath, amssymb, amsthm}
\usepackage{mathrsfs}
\usepackage{mathtools}
\usepackage[noabbrev,capitalize,nameinlink]{cleveref}
\crefname{equation}{}{}
\usepackage{fullpage}
\usepackage[noadjust]{cite}
\usepackage{graphics}
\usepackage{pifont}
\usepackage{tikz}
\usepackage{bbm}
\usepackage[T1]{fontenc}

\usetikzlibrary{arrows.meta}

\usepackage{environ}
\usepackage{framed}
\usepackage{url}
\usepackage[linesnumbered,ruled,vlined]{algorithm2e}
\usepackage[noend]{algpseudocode}
\usepackage[labelfont=bf]{caption}
\usepackage{cite}
\usepackage{framed}
\usepackage[framemethod=tikz]{mdframed}
\usepackage{appendix}
\usepackage{graphicx}
\usepackage[textsize=tiny]{todonotes}
\usepackage{tcolorbox}
\usepackage{enumerate}
\allowdisplaybreaks[1]
\usepackage{enumerate}
\usepackage{stmaryrd}
\usepackage[margin=1in]{geometry}

\usepackage[shortlabels]{enumitem}
\crefformat{enumi}{#2#1#3}
\crefrangeformat{enumi}{#3#1#4 to~#5#2#6}
\crefmultiformat{enumi}{#2#1#3}
{ and~#2#1#3}{, #2#1#3}{ and~#2#1#3}

\DeclareSymbolFont{symbolsC}{U}{pxsyc}{m}{n}
\SetSymbolFont{symbolsC}{bold}{U}{pxsyc}{bx}{n}
\DeclareFontSubstitution{U}{pxsyc}{m}{n}
\DeclareMathSymbol{\medcircle}{\mathbin}{symbolsC}{7}

\crefname{algocf}{Algorithm}{Algorithms}

\crefname{equation}{}{} 
\AtBeginEnvironment{appendices}{\crefalias{section}{appendix}} 

\usepackage[color,final]{showkeys} 

\colorlet{refkey}{orange!20}
\colorlet{labelkey}{blue!30}

\crefname{algocf}{Algorithm}{Algorithms}

\numberwithin{equation}{section}
\newtheorem{theorem}{Theorem}[section]
\newtheorem{proposition}[theorem]{Proposition}
\newtheorem{lemma}[theorem]{Lemma}

\crefname{claim}{Claim}{Claims}

\newtheorem*{question*}{Question}
\newtheorem{fact}[theorem]{Fact}

\theoremstyle{definition}

\newtheorem*{definition*}{Definition}

\theoremstyle{remark}
\newtheorem*{remark}{Remark}


\newcommand{\mb}{\mathbb}

\newcommand{\mc}{\mathcal}

\newcommand{\wt}{\widetilde}

\newcommand{\imod}[1]{~\mathrm{mod}~#1}

\newcommand{\eps}{\varepsilon}

\let\originalleft\left
\let\originalright\right
\renewcommand{\left}{\mathopen{}\mathclose\bgroup\originalleft}
\renewcommand{\right}{\aftergroup\egroup\originalright}
\newcommand{\tO}{\wt{\Omega}}

\renewcommand{\Re}{\mathrm{Re}}

\allowdisplaybreaks

\newcommand{\ignore}[1]{}

\title{On further questions regarding unit fractions}

\author[A1]{Yang P. Liu}
\address{School of Mathematics, Institute for Advanced Study, Princeton, NJ 08540, USA}
\email{yangpliu@ias.edu}

\author[A2]{Mehtaab Sawhney}
\address{Department of Mathematics, Massachusetts Institute of Technology, Cambridge, MA 02139, USA}
\email{msawhney@mit.edu}

\begin{document}

\begin{abstract}
We prove that a subset $A\subseteq [1, N]$ with 
\[\sum_{n\in A}\frac{1}{n} \ge (\log N)^{4/5 + o(1)}\]
contains a subset $B$ such that 
\[\sum_{n\in B} \frac{1}{n} = 1.\]
Our techniques refine those of Croot and of Bloom. Using our refinements, we additionally consider a number of questions regarding unit fractions due to Erd\H{o}s and Graham.
\end{abstract}

\maketitle

\section{Introduction}\label{sec:intro}
Erd\H{o}s and Graham \cite{EG79} raised a host of questions regarding unit fractions. Among these questions, they asked whether a coloring of $\mb{N} \setminus \{1\}$ into $k$ colors necessarily contains a color class $\mc{S}$ and $B\subseteq \mc{S}$ such that 
\[\sum_{n\in B}\frac{1}{n} = 1.\]

This was answered positively in work of Croot \cite{Cro03}. Croot proved that there exists an absolute constant $b$ such that if the interval $[2,b^k]$ is $k$-colored then there exists a color class which can be used to construct $1$ as a sum of reciprocals. Proving the density analog of Croot's result was also raised in work of Erd\H{o}s and Graham \cite{EG79}. This was recently resolved in work of Bloom \cite{Blo21} who proved that there is an absolute constant $C$ such that if set $A\subseteq [1, N]$ satisfies
\[\sum_{n\in A}\frac{1}{n} \ge \frac{C(\log N)(\log\log\log N)}{\log\log N}\]
then there exists $B\subseteq A$ such that 
\[\sum_{n\in B}\frac{1}{n} = 1.\]
Phrased another way, if $\lambda(N)$ is the maximal $\sum_{n\in A}\frac{1}{n}$ such that $A\subseteq [1,N]$ and $A$ does not contain a subset $B$ such that $\sum_{n\in B}1/n = 1$, then Bloom proves in \cite{Blo21} that $\lambda(N) \ll \frac{(\log N)(\log \log \log N)}{\log \log N}$. This is far from the best known lower bound of $\lambda(N) \gg (\log \log N)^2$; this lower bound is achieved by considering all numbers in $[1, N]$ with a prime factor larger than $N/(\log N)$ (see \cite[Appendix~A]{Blo21}).
Bloom writes regarding \cite{Blo21} that ``we do not believe the methods in this paper alone are strong enough to prove $\lambda(N)\ll (\log N)^{1-c}$ for some $c>0$''. Our main result is that a variation of the Fourier analytic techniques used by Croot \cite{Cro03} and Bloom \cite{Blo21} are sufficiently strong to achieve such a bound.

\begin{theorem}\label{thm:main}
Let $\eps > 0$, with $N$ sufficiently large depending on $\eps$. For any set $A \subseteq [1, N]$ satisfying $\sum_{n \in A} \frac{1}{n} \ge (\log N)^{4/5+\eps}$, there is a subset $B \subseteq A$ with $\sum_{n \in B} \frac{1}{n} = 1$.
\end{theorem}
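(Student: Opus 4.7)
The plan is to follow the Fourier-analytic framework of Croot~\cite{Cro03} and Bloom~\cite{Blo21}, refined to extract a polynomial density increment per iteration rather than a logarithmic one. The first step is a standard dyadic pigeonhole: from $\sum_{n \in A} 1/n \ge (\log N)^{4/5+\eps}$ I would extract a scale $y \in [1, N]$ and a refinement $A_y \subseteq A \cap [y, 2y]$ whose relative density $|A_y|/y$ is at least $(\log N)^{-1/5 + \eps/2}$. Passing to the smooth part of $A_y$---those $n$ whose prime factors all lie below $(\log N)^{O(1)}$---loses only a negligible fraction of the harmonic mass while giving access to a highly divisible modulus $Q$ on which to do Fourier analysis in $\mb{Z}/Q\mb{Z}$.

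Finding $B \subseteq A_y$ with $\sum_{n \in B} 1/n = 1$ decouples into a real constraint on the harmonic sum and a modular constraint $\sum_{n \in B} Q/n \equiv 0 \pmod{Q}$. The real constraint is handled by independently selecting each $n \in A_y$ with probability $p_n$ calibrated so that $\mb{E}\sum_{n \in B} 1/n$ matches the target, and absorbing the discrepancy via a disjoint ``topping-up'' reserve drawn from $A$ at other scales, just as in Bloom. The probability of the modular constraint is then
\[
\frac{1}{Q} \sum_{\xi \in \mb{Z}/Q\mb{Z}} \prod_{n \in A_y} \bigl(1 - p_n + p_n\, e(\xi/n)\bigr),
\]
and the central dichotomy is that either the $\xi = 0$ main term dominates---in which case random selection succeeds---or some frequency $\xi \ne 0$ satisfies $\bigl|\sum_{n \in A_y} \tfrac{1}{n}\, e(\xi/n)\bigr|$ comparable to $\sum_{n \in A_y} 1/n$, from which one extracts a Bohr set on which the relative density of $A_y$ is strictly larger and iterates.

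The exponent $4/5$ should emerge from balancing, across iterations, the current relative density $\delta$, the dimension of the Bohr set (growing like $\delta^{-O(1)}$), and the harmonic mass remaining in the topping-up reserve. If each iteration multiplies $\delta$ by at least $\delta^{-c}$ for some fixed $c > 0$---a polynomial rather than logarithmic gain---then starting from initial density $(\log N)^{-1/5 + \eps/2}$ the iteration terminates after $O(\log \log N)$ rounds, accounting for the $4/5$.

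The principal obstacle will be establishing the sharper density-increment lemma. Bloom's argument loses a factor of roughly $\log N$ per iteration, which caps the reachable harmonic threshold near $\log N / \log \log N$; going below this requires either an $L^p$-type restriction bound for the measure $1/n$ that improves on the $L^2$/Parseval estimate, or a multiscale amortization that charges the Bohr-set expansion to a single combined loss across all iterations. A secondary technical issue is maintaining the effectiveness of the modulus $Q$ as the scale shrinks through the iteration; this should be resolvable by carefully tracking which primes must divide $Q$ and invoking Croot-style factorization identities at the end to realize the final subset sum as exactly $1$.
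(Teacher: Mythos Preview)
Your proposal has two genuine gaps.

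First, the smoothness choice is fatal. Restricting to $n$ whose prime factors all lie below $(\log N)^{O(1)}$ does \emph{not} lose ``only a negligible fraction'' of the harmonic mass: the density of such smooth numbers in $[y,2y]$ for $y = N^{\Theta(1)}$ is $N^{-1+o(1)}$, so almost everything is discarded. The paper works instead with smoothness $S = N\exp(-6(\log N)^{4/5})$, i.e.\ barely sub-$N$. Pushing the analysis at this near-$N$ smoothness level is exactly where the difficulty lies and where the paper's contribution sits.

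Second, and more structurally, the Bohr-set density-increment iteration you sketch is not how the argument goes, and there is no indication it can be made to work here. The paper does \emph{not} iterate: after localizing to $[M,N]$ with $M = N\exp(-(\log N)^{4/5})$ and pruning so that every prime power $q \in \mc{Q}_A$ satisfies $qR(A_q) \ge \eta \approx (\log N)^{3/5}$, it applies a \emph{single} Fourier estimate (\cref{prop:main}) that directly shows the minor-arc contribution is $o(1/Q)$. The heart of that estimate is an ``inverse'' divisibility statement: for a minor-arc frequency $h$, the set $\mc{D}_h$ of prime powers $q$ for which most $n \in A_q$ have $h \bmod n$ small must all divide a \emph{single} integer $x_h \in (h-K/2, h+K/2)$. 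The new idea over Bloom is to lift each such $q$ to a multiple $qd_q \ge K$ via \cref{lem:lift}, and then use the Fundamental Lemma of Sieve Theory to exhibit many auxiliary primes $p$ dividing both $x_{q_1}$ and $x_{q_2}$, forcing $x_{q_1} = x_{q_2}$. The exponent $4/5$ falls out of balancing $\log(N/M)$, the smoothness cutoff $S$, the parameter $K$, and the density $\eta$ in the inequalities governing this sieve step---not from counting rounds of an iteration. Your proposed $L^p$-restriction or multiscale-amortization mechanism does not engage with this divisibility structure, and the ``large Fourier coefficient $\Rightarrow$ Bohr-set increment'' step you invoke has no clear meaning when the characters are $e(\xi/n)$ for varying $n$ rather than a fixed character on a fixed group.
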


We furthermore combine the Fourier analytic techniques used to prove \cref{thm:main} with several additional elementary arguments to address various other problems of Erd\H{o}s and Graham \cite{EG79} regarding unit fractions.

The first result considers the number of subsets of $[1,N]$ whose reciprocal sum is $1$. This question appears as Problem \# 297 on the list of Erd\H{o}s as curated by Bloom \cite{BloWeb}. In recent work, Steinerberger \cite{Ste24} proved that $|\mc{A}|\le 2^{.93N}$; in particular this is exponentially smaller than the trivial $2^{N}$ bound. Our additional result provides the log-asymptotic on the number of subsets of $[1, N]$ whose reciprocal sum is $1$.
\begin{theorem}
\label{thm:297}
Let $\lambda^\ast$ be the unique positive real number satisfying
\[ \int_0^1 \frac{e^{-\lambda^{\ast}/x}}{1+e^{-\lambda^{\ast}/x}}\cdot \frac{dx}{x} = 1. \]
Let $\mathcal{A} := \{ S \subseteq [1, N] : \sum_{s \in S} 1/s = 1 \}.$ Then for $\gamma^\ast := \lambda^\ast + \int_0^1 \log(1+e^{-\lambda^\ast/x}) \, dx$, we have that
\[ e^{\gamma^\ast N - o(N)} \le |\mathcal{A}| \le e^{\gamma^\ast N + o(N)}. \]
\end{theorem}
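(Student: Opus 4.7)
The plan is a matching upper and lower bound via an exponentially tilted product measure, with the lower bound drawing on a local limit theorem whose proof uses Fourier-analytic input in the spirit of \cref{thm:main}. Define the product measure $\mu$ on $2^{[1,N]}$ with independent inclusions of probability $p_n := e^{-\lambda^\ast N/n}/(1+e^{-\lambda^\ast N/n})$. The defining integral for $\lambda^\ast$ is precisely the Riemann-sum limit ensuring that $\mathbb{E}_\mu X = 1+o(1)$ for the random reciprocal sum $X := \sum_{n\in T}1/n$. Since $p_n/(1-p_n)=e^{-\lambda^\ast N/n}$, one has
\[ \mu(S) = \exp\!\Bigl(-\lambda^\ast N\cdot X(S) - \sum_{n=1}^N \log\bigl(1+e^{-\lambda^\ast N/n}\bigr)\Bigr), \]
so for every $S\in\mathcal A$, applying the Riemann-sum approximation $\sum_{n=1}^N \log(1+e^{-\lambda^\ast N/n}) = N\int_0^1\log(1+e^{-\lambda^\ast/x})\,dx + o(N)$ together with $X(S)=1$, the weight collapses to the uniform value $\mu(S)=e^{-\gamma^\ast N+o(N)}$. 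Summing $\mu(S)\le 1$ over $\mathcal A$ yields the upper bound $|\mathcal{A}|\le e^{\gamma^\ast N+o(N)}$.

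For the lower bound, the same identity gives $|\mathcal A|=e^{\gamma^\ast N+o(N)}\cdot\mu(\mathcal A)$, reducing the task to showing $\mu(\mathcal{A})=\Pr_\mu[X=1]\ge e^{-o(N)}$. A direct computation using the substitution $u=\lambda^\ast N/n$ gives $\mathrm{Var}_\mu(X)=\sum_n p_n(1-p_n)/n^2 = O(1/N)$, so Chebyshev's inequality already concentrates $X$ on a window of width $N^{-1/2+o(1)}$ around $1$ with probability $1-o(1)$. To show that the atom at $X=1$ still carries an $e^{-o(N)}$ fraction of this mass, I would split $[1,N]$ into a head $[1,N_0]$ (say $N_0=N/\log\log N$) and tail $(N_0,N]$, condition on $T_L:=T\cap[1,N_0]$, and use that with probability bounded away from zero the forced tail target $t(T_L):=1-\sum_{n\in T_L}1/n$ lies in a window around the conditional tail mean; the problem then reduces to the local-limit bound $\Pr_\mu[\sum_{n\in T_H}1/n=t\mid T_L]\ge e^{-o(N)}$ for every such target $t$.

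The main obstacle is this final local-limit estimate. The tail sum $\sum_{n\in T_H}1/n$ lives on the rational lattice $\tfrac{1}{L_H}\mathbb{Z}$ with $L_H:=\mathrm{lcm}(N_0+1,\ldots,N)$, but hits only an exponentially sparse subset of it, so a textbook local central limit theorem does not apply. Instead I would appeal to Fourier-analytic character-sum estimates of the kind developed in the proof of \cref{thm:main}, showing that the characteristic function $\mathbb{E}_\mu e^{2\pi i\theta \sum_{n\in T_H}1/n}$ is small away from a bounded set of resonances; this translates into approximately uniform concentration of the tail-sum distribution on its effective support near the mean, from which the required atom bound follows. Executing this Fourier analysis carefully, at the scale where the relevant reciprocal sums have order $1$ rather than order $(\log N)^{4/5+\eps}$ as in \cref{thm:main}, is the technical heart of the lower bound.
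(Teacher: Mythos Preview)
Your upper bound is correct and is exactly the paper's argument: the tilted product measure assigns the uniform weight $e^{-\gamma^\ast N+o(N)}$ to every $S\in\mathcal A$, and summing gives the bound. The reduction of the lower bound to $\Pr_\mu[X=1]\ge e^{-o(N)}$ is also exactly what the paper does.

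The gap is in how you plan to establish this atom bound. You propose a head/tail split and a local-limit theorem on the tail $(N_0,N]$ via Fourier analysis ``in the spirit of \cref{thm:main}''. But the Fourier machinery behind \cref{thm:main} (specifically \cref{prop:simple} and \cref{prop:main}) requires every element of the set to be $S$-smooth for some $S=N^{1-o(1)}$; without this, the common denominator $Q=\mathrm{lcm}\{n:n\in(N_0,N]\}$ is of size $e^{\Theta(N)}$ by the prime number theorem, and the best any character-sum argument can give is $\Pr[R(B)=t]\ge 1/(4Q)=e^{-\Theta(N)}$, which is useless here. Your tail $(N_0,N]$ contains every prime in that range, so this obstruction bites. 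You never mention discarding non-smooth numbers, and that is the key structural step.

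The paper avoids this as follows: it restricts to the set $A$ of $S$-smooth integers in $[M,N]$ with $M=N(\log\log\log N)^{-1/2}$ and $S=N/(\log N)^4$, so that only $O(N\log\log N/\log N)=o(N)$ integers are discarded (hence only $o(N)$ entropy is lost). It then rescales each $p_i$ by a common $1+o(1)$ factor to force $\sum_{i\in A}\wt p_i/i=1$ exactly, and applies \cref{prop:simple} directly to obtain $\Pr[R(B)=1]\ge 1/(4Q)\ge e^{-O(S)}=e^{-o(N)}$. This rescaling trick is both simpler than your conditioning scheme and sidesteps the need to hit a \emph{random} target $t(T_L)$: once the mean is exactly $1$, you only need the atom at the mean. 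Your plan becomes correct if you (i) further prune the tail to $S$-smooth numbers for $S\asymp N/(\log N)^{O(1)}$, (ii) argue this costs only $o(N)$ in entropy, and (iii) either rescale to center at the target or prove a version of \cref{prop:simple} uniform over targets in a short window.
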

\begin{remark}
Via numerical computation, we find that $\lambda^{\ast}\approx .127191$, $\gamma^\ast\approx .631573$, and $e^{\gamma^\ast} \approx 1.88057$.
\end{remark}

Erd\H{o}s and Graham \cite{EG79} also considered the largest subset of $[1, N]$ which does not an contain a subset with reciprocal sum $1$; this appears as Problem \# 300 in \cite{BloWeb}. From work of Croot \cite{Cro03}, it follows that if $|A|\ge (1-\delta)N$ for $\delta$ sufficiently small there exists such a subset. Our next result is an essentially tight bound on the largest number of elements that a subset $A \subseteq [1, N]$ can contain without having a subset whose reciprocal sum is $1$.
\begin{theorem}\label{thm:300}
Fix $\eps \in (0,1/2)$. For $N$ sufficiently large in term of $\eps$ and $A \subseteq [1, N]$ with $|A|\ge (1-1/e+\eps)N$, there exists a subset $A'$ with 
\[\sum_{n\in A'}\frac{1}{n} = 1.\]
\end{theorem}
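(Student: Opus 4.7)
The plan is to reduce \cref{thm:300} to \cref{thm:main} by a case split on the reciprocal sum $T:=\sum_{n\in A}\tfrac{1}{n}$. A short rearrangement---minimising $T$ over sets $A\subseteq[1,N]$ with $|A|\ge(1-1/e+\eps)N$, the extremiser being $A^c=[1,\lfloor(1/e-\eps)N\rfloor]$---gives
\[
T\ge \log\tfrac{e}{1-e\eps}-o(1)=1+e\eps+O(\eps^2)-o(1),
\]
so $T$ exceeds $1$ by a margin $\Omega(\eps)$. If $T\ge(\log N)^{4/5+\eps/2}$, then \cref{thm:main} applied directly to $A$ yields the desired subset summing to $1$, and we are done.

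Otherwise $T\le(\log N)^{4/5+\eps/2}$, which forces $\sum_{n\in A^c}\tfrac{1}{n}\ge \log N-o(\log N)$. Combined with $|A^c|\le(1/e-\eps)N$, this forces the reciprocal sum of $A^c$ to be nearly the maximum $H_{|A^c|}$ attainable over sets of its cardinality, which in turn forces $A^c$ to approximately coincide with the initial segment $[1,M_*]$ for $M_*=(1/e-\eps/2+o(1))N$ up to an $o(N)$-size exceptional set. Consequently $A$ essentially contains the consecutive interval $(M_*,N]$, missing only $o(N)$ elements, with $\sum_{n\in A\cap(M_*,N]}\tfrac{1}{n}=1+\Omega(\eps)-o(1)$.

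For such near-interval $A$ the hypothesis of \cref{thm:main} fails outright, so I would instead establish an auxiliary lemma tailored to this regime: every $A\supseteq(M,N]\setminus E$ with $|E|=o(N)$ and $M\le(1/e-\Omega(\eps))N$ contains a subset whose reciprocal sum is $1$. The proof of this lemma proceeds in two stages. First, a greedy skim of large elements from the top of $A\cap(M,N]$ brings the running reciprocal sum within $O(1/N)$ of $1$. Second, a local swap procedure---trading one element of the current subset for two unused smaller ones of matching reciprocal sum---closes the final sub-$1/N$ arithmetic gap. The swap step exploits the abundance of unused elements ($\Omega(N)$ choices per step) together with the fine granularity of subset sums of distinct-denominator reciprocals to guarantee exact termination at the target $1$.

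The principal obstacle is precisely this exact-representation lemma in the near-interval regime: partial-sum and intermediate-value arguments produce subsets of $A$ with reciprocal sum $1\pm o(1)$ for free, but hitting $1$ on the nose requires arithmetic-combinatorial control over the subset-sum lattice of $\{1/n:n\in A\cap(M,N]\}$. Making the swap argument rigorous demands either a density/pigeonhole argument showing that achievable subset sums cover a dense subset of $[0,T]$ with granularity $\ll 1/N^2$, or a congruence-based argument in the spirit of the proof of \cref{thm:main} applied to a rescaled sub-problem (e.g.\ restricting to multiples of a suitably chosen $d$ and working on the rescaled scale $N/d$) that guarantees some target is hit exactly.
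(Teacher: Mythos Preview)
Your proposal has a genuine gap, and the structural claim you lean on in the second case is actually false.

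The case split on $T = R(A)$ is an unnecessary detour. The paper does not invoke \cref{thm:main} at all; it applies the more flexible Fourier-analytic \cref{prop:main} directly. The observation you are missing is elementary and holds in both of your cases: since $|A| \ge (1 - 1/e + \eps)N$, the restriction $A' := A \cap [\eps N/2, N]$ already satisfies $R(A') \ge 1 + \eps/10$, simply because $|A'| \ge (1 - 1/e + \eps/2)N$ and the worst case is $A' = [(1/e - \eps/2)N, N]$. One then removes non-$S$-smooth elements (for $S = N\exp(-6(\log N)^{4/5})$) and elements with $\Omega(n) > 5\log\log N$, prunes via \cref{lem:prune} so that every $q \in \mc{Q}_{A'}$ has $qR(A'_q) \gg \eps/\log\log N$, and applies \cref{prop:main} with $x = Q$. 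The whole point of \cref{prop:main}, as opposed to \cref{thm:main}, is that it only needs $R(A)$ to exceed $1$ by a fixed margin, provided $A \subseteq [M, N]$ with $M$ a constant fraction of $N$ and the density conditions on multiples hold; the second branch of $\Gamma$ in \cref{prop:main} is there precisely for this regime.

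Your structural assertion---that when $T \le (\log N)^{4/5+\eps/2}$ the set $A$ must contain $(M_*, N]$ up to $o(N)$ exceptions---is wrong. Take $k = \lfloor(1/e - \eps)N\rfloor$ and $A^c = [1, k/2] \cup (k, 3k/2]$. Then $|A^c| = k$ and $R(A^c) = H_{k/2} + H_{3k/2} - H_k = H_k - \log(4/3) + o(1)$, so $T = \log(N/k) + \log(4/3) + o(1) \approx 1.29$, well inside your second case. Yet $A$ omits the entire block $(k, 3k/2]$, which has $k/2 = \Theta(N)$ elements above $M_*$. The deficit of $R(A^c)$ from the extremal value $H_{|A^c|}$ is only $O(1)$, not $o(1)$, and an $O(1)$ deficit permits $\Theta(N)$-sized rearrangements near the boundary.

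Even granting the near-interval structure, your greedy-plus-swap scheme is not a proof, as you yourself flag. Hitting $1$ exactly rather than $1 \pm o(1)$ is the entire difficulty in this circle of problems, and \cref{prop:main} is precisely the tool that does it. Your closing suggestion of ``a congruence-based argument in the spirit of the proof of \cref{thm:main}'' is in fact what the paper does---but from the outset, with no case split and no structural analysis of $A^c$.
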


Note here the constant $(1-1/e)$ is sharp as for $\gamma > 0$ and $N$ sufficiently large, we have that 
\[\sum_{n =  \lfloor (1/e + \gamma) N\lfloor}^{N}\frac{1}{n}\le \int_{1/e + \gamma/2}^{1}\frac{dx}{x} < 1.\] 
This constant is closely related to work of Croot \cite{Cro01} about how ``narrow'' the range of denominators in Egyptian fraction used to construct $1$ may be. Answering a question of Erd\H{o}s and Graham \cite{EG79}, Croot \cite{Cro01} proved that given any rational number $r$, one may write 
\[r = \frac{1}{x_1} + \cdots + \frac{1}{x_k}\]
with 
\[N<x_1<\cdots <x_k<\bigg(e^{r}N + O_{r}\bigg(\frac{N\log\log N}{\log N}\bigg)\bigg).\]
The proof of \cref{thm:300} in particular embeds a proof of the above when $r=1$.

In this spirit, Erd\H{o}s and Graham \cite{EG79} asked whether given a dense subset of $[1,N]$ one may represent a fraction with small denominator. This appears as Problem \# 310 in \cite{BloWeb}.
\begin{proposition}\label{prop:310}
There exists a constant $C\ge 1$ such that the following holds. Consider $\eps > 0$, $N$ sufficiently large in terms of $\eps$, $A\subseteq [1,N]$ with $|A|\ge \alpha N$, and $\alpha\in [(\log N)^{-1/7+\eps},1/2]$. Then there exist $1 \le s \le t \le \exp(C/\alpha)$ and $B\subseteq A$ such that 
\[\sum_{n\in B}\frac{1}{n} = \frac{s}{t}. \]
\end{proposition}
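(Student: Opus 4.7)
The plan is to set $L := \exp(C/\alpha)$ for a large absolute constant $C$; since $\alpha \ge (\log N)^{-1/7+\eps}$ we have $L = N^{o(1)}$. If $A$ contains some $n_0 \le L$, take $B = \{n_0\}$, which gives $\sum_{n \in B} 1/n = 1/n_0$ with $s = 1$ and $t = n_0 \le L$. So we may henceforth assume $A \subseteq (L, N]$. If additionally $\sum_{n \in A} 1/n \ge (\log N)^{4/5+\eps/2}$, then \cref{thm:main} applied to $A$ yields $B \subseteq A$ with $\sum_{n \in B} 1/n = 1$, and we are done with $s = t = 1$.

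The main case is $A \subseteq (L, N]$ with $|A| \ge \alpha N$ and $\sum_{n \in A} 1/n < (\log N)^{4/5+\eps/2}$; then $A$ is concentrated near $N$. Here the plan is to find a small prime $p \le L$ and rescale. Since the $L$-rough integers in $[1, N]$ have density $\sim e^{-\gamma}/\log L = \alpha e^{-\gamma}/C$ by Mertens' theorem, which is $< \alpha/2$ once $C > 2e^{-\gamma}$, at least $\alpha N/2$ elements of $A$ admit a prime factor $\le L$. Pigeonholing over the primes $p \le L$ produces one for which $|A \cap p\mb{Z}|$ is quantitatively large; setting $A_p := (A \cap p\mb{Z})/p \subseteq [1, N/p]$ and using the identity
\[
\sum_{m \in A_p} \frac{1}{m} = p\sum_{n \in A \cap p\mb{Z}}\frac{1}{n},
\]
I would apply \cref{thm:main} to $A_p$ to obtain $B' \subseteq A_p$ with $\sum_{m \in B'} 1/m = 1$. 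Setting $B := pB' \subseteq A$ realizes $\sum_{n \in B} 1/n = 1/p$, giving $s = 1$ and $t = p \le L$.

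The main obstacle is verifying that the rescaled reciprocal sum indeed exceeds the \cref{thm:main} threshold $(\log(N/p))^{4/5+\eps'}$ for some prime $p \le L$. In the heuristic case of $A$ spread uniformly over $(L, N]$, one has $\sum_{m \in A_p} 1/m \approx \alpha \log(N/L) \approx (\log N)^{6/7+\eps}$, which comfortably exceeds $(\log N)^{4/5+\eps'}$. But for $A$ concentrated in a narrow subinterval of $(L, N]$, the rescaled sums may all be small, and one must instead adapt the Fourier-analytic machinery underlying \cref{thm:main} so as to exploit the density of $A$ directly in the narrow range---in the spirit of Croot's narrow-range theorem cited after \cref{thm:300}. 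The exponent $1/7$ in the hypothesis on $\alpha$ reflects precisely the point at which this density-based Fourier argument remains viable, matching the $(\log N)^{4/5}$ loss from \cref{thm:main} against the density boost available after rescaling.
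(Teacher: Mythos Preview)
Your proposal has a genuine gap, which you yourself identify but do not close. In the narrow-interval case $A \subseteq [N/2, N]$ (which is in fact the generic case, since any $A$ of density $\alpha$ in $[1,N]$ has a dyadic subinterval of density $\gg \alpha$), rescaling by a prime $p \le L$ gives $A_p \subseteq [N/(2p), N/p]$ with $|A_p| \approx \alpha N/p$, hence $R(A_p) \approx \alpha$. This is far below the $(\log(N/p))^{4/5+\eps'}$ threshold needed for \cref{thm:main}, regardless of how $p$ is chosen. Your pigeonhole over primes $p \le L$ cannot repair this: it can at best produce $|A \cap p\mb{Z}| \gg \alpha N/p$ for some $p$, which is exactly the regime where the rescaled sum stays $\approx \alpha$. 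The sentence ``one must instead adapt the Fourier-analytic machinery'' is the entire content of the proof and is not supplied.

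The paper's proof proceeds quite differently. It does not try to hit the value $1$ (or $1/p$ via rescaling) at all. Instead it restricts to a dyadic interval, cleans $A$ to be $S$-smooth with bounded $\Omega$ and with $qR(A_q) \gg \alpha/\log\log N$ for every $q \in \mc{Q}_A$, and then applies \cref{prop:main} directly with $\delta = 1/7$. The point of \cref{prop:main} is that it hits \emph{any} target $x/Q$ with $R(A) \in [(1+1/\log N)x/Q, (\log N)x/Q]$, and its density hypothesis on $\eta$ is only $\eta \approx \alpha$, not $(\log N)^{4/5}$. One then uses a sieve (\cref{lem:fund-lem}) to locate a prime $p \in [10/\alpha^2, \exp(C/\alpha)]$ with $p \in \mc{Q}_A$, so that $p \mid Q$, and chooses $x$ so that $x/Q$ has denominator exactly $p$. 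The exponent $1/7$ arises not from the $4/5$ in \cref{thm:main} but from balancing the second branch of $\Gamma$ in \cref{prop:main}: with $\log(N/M) = O(1)$ one gets $\Gamma \gg \alpha^2(\log N)^{1-2\delta}$, and the final constraint $\Gamma^2 \gg (\log N)^{1-\delta}$ forces $\alpha^4 \gg (\log N)^{3\delta - 1}$, optimized at $\delta = 1/7$.
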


We give a number of remarks regarding this proposition. We have labeled \cref{prop:310} as a proposition because a rather direct application of \cite[Proposition~1]{Blo21} along with standard estimates (which are present in \cite{Blo21}) immediately demonstrates that one can take $t = O_{\alpha}(1)$, resolving the original conjecture of Erd\H{o}s and Graham. Achieving the precise dependence in \cref{prop:310} (that one can take $\alpha \ge (\log N)^{-1/7+\eps}$, and $t \ll \exp(C/\alpha)$), requires the improved understanding given by the techniques of this paper. In particular, that one can take $\alpha \ge (\log N)^{-1/7+\eps}$ instead of requiring say $\alpha \ge (\log \log N)^{-1}$ essentially corresponds to the logarithmic savings in \cref{thm:main}.

The next remark is that the dependence $t\ll \exp(C/\alpha)$ is essentially sharp. To see this, take $A$ to be the subset of $[N/2,N]$ with all prime factors larger than $\exp(1/\alpha)$. This set can be checked to have density $\gg \alpha$ by standard sieve theory (e.g. \cref{lem:fund-lem}) and as $A$ has reciprocal sum $<1$ any nontrivial sum of reciprocals from a subset of $A$ has denominator at least $\exp(1/\alpha)$. 

We finally end by considering representing fractions by ``low complexity'' unit fractions. Erd\H{o}s and Graham \cite{EG79} considered given a fraction $a/b$ with $1\le a\le b$ both the ``fewest'' number of unit fractions required to represent $a/b$ and the ``smallest size'' denominators required to represent such fractions. In the direction of ``fewest'' number of unit fractions required, Erd\"{o}s \cite{Er50} proved that $\gg \log\log b$ are fractions are required for a typical $a$ while Vose \cite{Vos85} proved that each fraction can be constructed with $\ll \sqrt{\log b}$ many unit fractions. 

We will primarily be concerned with constructing $a/b$ with denominators of ``small size''. Let 
\[D(a,b) = \max_{1\le a\le b-1}\min\bigg\{n_k: \frac{a}{b} = \sum_{j=1}^{k}\frac{1}{n_j}~\text{ and }n_1<\cdots<n_k\bigg\}.\]
Bleicher and Erd\H{o}s \cite{BE76} proved that $D(a,b) \ll b(\log b)^2$ and asked whether $D(a, b) \ll b(\log b)^{1+o(1)}$ in fact holds; this question was reiterated by Erd\H{o}s and Graham \cite{EG79} (Problem \# 305 in \cite{BloWeb}). Yokota resolved this question in a series of papers \cite{Yok86,Yok88,Yok88b}. First, Yokota \cite{Yok86} reduced to the case where $b$ is prime and then via increasingly ingenious elementary arguments proved $D(a,b)\le b(\log b)^{3/2}$ in \cite{Yok88}, and finally in \cite{Yok88b} proved 
\[D(a,b) \ll b(\log b)(\log\log b)^{4}(\log\log\log b)^2.\]

Our Fourier analytic techniques coupled with basic arguments in fact improve this bound slightly.
\begin{theorem}\label{thm:305}
For integers $1 \le a < b$, there are integers \[ 1 < n_1 < n_2 < \dots < n_k \le b(\log b)(\log \log b)^3(\log\log\log b)^{O(1)} \]with $\frac{a}{b} = \frac{1}{n_1} + \frac{1}{n_2} + \dots + \frac{1}{n_k}.$
\end{theorem}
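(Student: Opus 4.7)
Following \cite{Yok86}, we may assume $b$ is prime (absorbing minor factors into the $(\log\log\log b)^{O(1)}$ term). Set $M = b(\log b)(\log\log b)^3(\log\log\log b)^{O(1)}$, and seek $B \subseteq [1, M]$ with $\sum_{n \in B} 1/n = a/b$. Since $M \ll b^2$ for $b$ large, every $n \in [1, M]$ has $b$-adic valuation at most $1$, so $B$ decomposes as $B = B_1 \sqcup B_2$ with $B_1 \subseteq b\mathbb{Z}$ and $\gcd(n, b) = 1$ for all $n \in B_2$. Writing $B_1 = b \cdot B_1'$ with $B_1' \subseteq [1, M/b]$ and multiplying the equation by $b$ yields
\[ \sum_{m \in B_1'} \frac{1}{m} + b \sum_{n \in B_2} \frac{1}{n} = a. \]
Let $\sum_{m \in B_1'} 1/m = P/Q$ in lowest terms; since $M/b < b$, $\gcd(Q, b) = 1$. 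A short computation shows this equation admits a valid $B_2$ exactly when $P \equiv aQ \pmod{b}$ and $P \le aQ$, in which case the constraint on $B_2$ becomes $\sum_{n \in B_2} 1/n = P'/Q$ with $P' := (aQ - P)/b \ge 0$.

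The plan is a two-stage construction. First, I would choose $B_1' \subseteq [L, K] \subseteq [1, M/b]$ satisfying the congruence $P \equiv aQ \pmod{b}$ with $P/Q = O(1)$. Picking $L, K$ with $K - L \gg \log b$ and $K/L = O(1)$ keeps $\sum_{m = L}^{K} 1/m = O(1)$, and the residues $\{m^{-1} \pmod{b} : L \le m \le K\}$ form a sufficiently generic subset of $(\mathbb{Z}/b\mathbb{Z})^\times$. A Fourier argument on $\mathbb{Z}/b\mathbb{Z}$, combined with Weil-type bounds on character sums $\sum_m \chi(m^{-1})$ for non-trivial $\chi$, shows that the subset sums $\{\sum_{m \in S} m^{-1} \pmod{b} : S \subseteq [L, K]\}$ equidistribute over $\mathbb{Z}/b\mathbb{Z}$, allowing me to solve the congruence.

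Second, with $B_1'$ fixed, I would find $B_2 \subseteq A := \{n \in [Y, M] : \gcd(n, b) = 1\}$ satisfying $\sum_{B_2} 1/n = P'/Q$ by invoking the Fourier-analytic subset-sums machinery underlying \cref{thm:main}, adapted to arbitrary rational targets in place of the target $1$. Taking $Y \sim L$ (so that $Q \mid \operatorname{lcm}(A)$, since all prime factors of $Q$ lie in $[L, K] \subseteq [Y, M]$, and so that $\log(M/Y) \gg (\log M)^{4/5 + \eps}$) supplies both the divisibility hypothesis on the target and the reciprocal-sum hypothesis of \cref{thm:main}.

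The main obstacle is stage two: the denominator $Q$ can be as large as $\exp((1 + o(1)) M/b)$, so the machinery must handle rational targets of very large but $M/b$-smooth denominator. The Fourier analysis must be run on $\mathbb{Z}/Q\mathbb{Z}$ prime-factor by prime-factor, and careful bookkeeping---balancing the reciprocal sum of $A$, the range $[Y, M]$, the smoothness and support of $P'/Q$, and the equidistribution cost in stage one---is what pins down the factors $(\log\log b)^3 (\log\log\log b)^{O(1)}$ in $M$. The improvement of one factor of $\log\log b$ over Yokota's bound $(\log\log b)^4 (\log\log\log b)^2$ from \cite{Yok88b} comes from invoking the logarithmic-savings Fourier argument underlying \cref{thm:main} in place of Yokota's more elementary covering-system/sieve arguments.
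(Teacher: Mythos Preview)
Your two-stage architecture --- split into multiples of $b$ and non-multiples, use the first to clear the $b$-adic obstruction, then fill in the Archimedean remainder with the second --- is exactly the paper's structure. The difference, and the gap, is in how stage one is executed.

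Your stage one is not justified. You want subset sums of $\{m^{-1} \bmod b : m \in [L,K]\}$ to cover $\mathbb{Z}/b\mathbb{Z}$, and you appeal to Weil-type bounds on $\sum_m \chi(m^{-1})$. But $[L,K]$ has only $K-L \asymp \log b$ elements, while Weil/P\'olya--Vinogradov give error $O(\sqrt{b}\log b)$; an incomplete character sum over an interval of length $\log b$ carries no usable information. The Fourier count of subset sums hitting residue $a$ is $\frac{1}{b}\sum_h e(-ha/b)\prod_m(1+e(hm^{-1}/b))$, and for $h\neq 0$ you would need $\prod_m |\cos(\pi h m^{-1}/b)| = o(1/b)$. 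Heuristically this holds once $K-L > (\log b)/\log 2$, but establishing it for each $h$ is precisely the kind of ``minor arc'' divisibility problem the paper's machinery is built for --- not something classical character-sum estimates deliver on intervals this short.

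The paper sidesteps the congruence entirely. It fixes $\wt{N} \asymp (\log b)(\log\log b)^3(\log\log\log b)^{O(1)}$, takes $Q$ to be the lcm of prime powers up to a threshold $S$ chosen so that $Q>10b$, and then applies its \emph{simplified} Fourier proposition (\cref{prop:simple}, via \cref{lem:294}) to produce $A\subseteq[\wt{N}/16,\wt{N}]$ with $R(A)=(aQ-xb)/Q$ for an integer $x$ with $aQ-xb\in[Q/3,2Q/3]$. Scaling by $b$ gives $\wt{A}=bA$ with $a/b - R(\wt{A}) = x/Q$, which has $S$-smooth denominator; a second call to \cref{lem:294} writes $x/Q$ as $R(\wt{B})$ for $\wt{B}=yB$, with $y$ chosen to keep $\wt{A},\wt{B}$ disjoint. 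So the ``congruence'' is absorbed into the choice of target $(aQ-xb)/Q$, and both stages reuse the same Fourier-analytic lemma over the \emph{full} set of smooth integers in $[\wt{N}/16,\wt{N}]$. No reduction to prime $b$ is needed, and the simpler \cref{prop:simple} suffices rather than the general-set \cref{prop:main} underlying \cref{thm:main}.
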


We remark that Bleicher and Erd\H{o}s \cite{BE76b} proved that 
\[D(a,b)\gg b(\log b)(\log\log b)^{1-o(1)}.\]
Finally, Erd\H{o}s and Graham \cite{EG79} (Problem \# 305 in \cite{BloWeb}) raised the related question of studying the values of $t$ for which one can write $1$ as a sum of unit fractions with reciprocal containing $t$ and integers between $t+1$ and $N$. To our knowledge there has been no previous work on this question. We resolve it up to a $(\log\log N)^{O(1)}$ factor in an essentially identical manner to \cref{thm:305}.
\begin{theorem}\label{thm:294}
Let $N\ge 1$ and let $t = t(N)$ be the least integer such that there is no solution to 
\[1 = \frac{1}{n_1} + \cdots + \frac{1}{n_k}\]
with $t=n_1<n_2<\ldots<n_k\le N$. Then 
\[\frac{N}{(\log N)(\log\log N)^3(\log\log\log N)^{O(1)}}\ll t(N)\ll \frac{N}{\log N}.\]
\end{theorem}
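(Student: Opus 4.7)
For the \emph{upper bound} $t(N) \ll N/\log N$, I exhibit a prime $p \in (N/\log N, 2N/\log N]$ for which no representation $1 = 1/p + \sum_{i \ge 2} 1/n_i$ with $p < n_i \le N$ exists; this yields $t(N) \le p \ll N/\log N$. Let $J \subseteq \{2, \ldots, \lfloor N/p\rfloor\}$ index the multiples of $p$ appearing among the $n_i$. The sum over non-multiples has nonnegative $p$-adic valuation, while $1 - 1/p$ has $p$-adic valuation $-1$; combined with the fact that $\lfloor N/p\rfloor < p$ forces $\sum_{j \in J} 1/j$ to have denominator coprime to $p$, a short $p$-adic analysis then forces $\sum_{j \in J} 1/j \equiv -1 \pmod p$. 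Writing this sum in lowest terms as $c/d$, the condition becomes $p \mid c+d$. Because $\lfloor N/p\rfloor \le \log N$, the prime number theorem gives $d \le \operatorname{lcm}(1, \ldots, \lfloor N/p\rfloor) \le e^{\log N(1+o(1))} = N^{1+o(1)}$, hence $c+d \le N^{1+o(1)}$ has at most $1+o(1)$ prime divisors exceeding $N/\log N$. As the number of subsets $J$ is at most $2^{\log N} = N^{\log 2}$, the bad primes in the interval number $O(N^{\log 2}) = o(N/(\log N)^2)$, while the total number of primes in the interval is $\gg N/(\log N)^2$. A good prime exists.

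For the \emph{lower bound}, I apply the construction underlying \cref{thm:305} to the fraction $(t-1)/t$ with the extra stipulation that every denominator used lies in $(t, N]$. The denominator bound in \cref{thm:305} is $t(\log t)(\log\log t)^3(\log\log\log t)^{O(1)}$, which is $\le N$ exactly in the claimed regime $t \ll N/((\log N)(\log\log N)^3(\log\log\log N)^{O(1)})$. The lower-endpoint requirement $n_i > t$ is enforced by placing the base set of smooth integers (on which the Fourier-analytic subset-selection of \cref{thm:main} operates) inside an interval $(t, ct]$ for a constant $c > 1$ (e.g.\ $c = e$) large enough that the base set's reciprocal sum exceeds $(t-1)/t$ by a controlled margin; the small residual is then absorbed via further smooth unit fractions in $(ct, N]$, and the Fourier step promotes ``approximate sum'' into ``exact sum''.

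The main obstacle is the lower bound: it inherits the full Fourier-analytic framework of \cref{thm:main} and the constructive scheme of \cref{thm:305}, modified so that every denominator strictly exceeds $t$. The paper's remark that the argument is ``essentially identical'' to that of \cref{thm:305} reflects that these modifications are routine, amounting to verifying that the sieve-theoretic smooth-integer density estimates, the Fourier refinement step, and the residual absorption all function within the shifted range $(t, N]$. The upper bound, by contrast, is a short elementary application of the prime number theorem and the $p$-adic obstruction described above.
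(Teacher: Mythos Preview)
Your upper bound is correct but more elaborate than needed. The paper simply chooses any prime $t > 10N/\log N$; then $\lfloor N/t\rfloor < (\log N)/10$, so $\operatorname{lcm}(2,\ldots,\lfloor N/t\rfloor) \le N^{1/3}$ by the Chebyshev bound, and writing $\sum_{j\in J} 1/j = a/b$ one has $a+b < N^{1/2} < t$. Hence \emph{every} such prime is obstructing, and no counting of bad primes is required. Your argument, by working in $(N/\log N, 2N/\log N]$ where the lcm may reach $N^{1+o(1)}$, is forced into the subset-counting step; it works, but the extra effort buys nothing.

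Your lower bound has a genuine gap. You propose to select a base set of \emph{smooth} integers inside $(t, ct]$ and run the Fourier machinery there. But if $t$ has a prime factor $p > S$ (for instance $t$ itself prime, which is the critical case), then every $S$-smooth integer has reciprocal with denominator coprime to $p$, and no subset sum of such reciprocals can equal $(t-1)/t$, whose reduced denominator is divisible by $p$. Smooth numbers in $(t, ct]$ are simply unable to ``see'' the prime $p$. This is precisely the obstruction your own upper-bound argument exploits, and it does not disappear by shifting the interval.

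The construction underlying \cref{thm:305} that you invoke resolves this not by using smooth numbers near $t$, but by using \emph{multiples of $t$}: one first selects a small set $B \subseteq [\wt N/16, \wt N]$ of $S$-smooth integers (with $\wt N \asymp (\log N)(\log\log N)^3(\log\log\log N)^{O(1)}$) and takes denominators $\{tn : n \in B\}$, so that $\tfrac{1}{t}(1+R(B)) = m/Q$ for a highly smooth $Q$. The large prime factors of $t$ are cleared in this step. The residual $1 - m/Q$ is then represented using a second application of the Fourier proposition on the interval $[N/16, N]$ (not $(ct, N]$ near $t$), which is legitimate because $Q$ is $S$-smooth with $S \le (\log N)^2$. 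Both families of denominators then lie in $(t, N]$: the first because $tn \ge 2t$, the second because $N/16 > t$. Your sketch conflates these two stages and misplaces the interval for the second one.
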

\begin{remark}
The upper bound is claimed by Erd\H{o}s and Graham \cite[pg.~35]{EG79} without proof (although very closely related arguments appear in \cite{BE76,BE76b}). We include the simple proof in \cref{sec:294-297} for the reader's convenience.
\end{remark}

\subsection{Previous work}
In this section, we focus on the work of Croot \cite{Cro03} and Bloom \cite{Blo21}. 

While we phrased Croot's result as a coloring result, in reality Croot proves a density result with respect to smooth numbers. In particular, Croot proves that given a subset of $N^{1/10}$-smooth numbers from $[N^{9/10},N]$ with reciprocal sum $\ge 6$ there exists a subset with reciprocal sum equal to $1$. This is sufficient for proving the desired coloring result, as a positive density of integers from  $[N^{9/10},N]$ are $N^{1/10}$--smooth. A crucial realization of Croot \cite{Cro99,Cro01,Cro03} in a series of works regarding unit fractions is that Fourier analytic methods may be applied effectively. The smoothness parameter is used to solve a certain ``inverse'' problem regarding modular inverses. The key issue in passing from coloring to density results is improving the smoothness threshold to $N^{1-o(1)}$.

This key technical improvement of the smoothness upper bound was accomplished in work of Bloom \cite{Blo21}. This allows Bloom to prove a stronger conjecture of Erd\H{o}s and Graham \cite{EG79} that any subset of $\mb{N}$ with positive upper density contains a subset with reciprocal sum equal to $1$.

 The key improvement in our work stems from the precise technical details for how the ``inverse problem'' alluded to above is solved when handling numbers of smoothness $N^{1-o(1)}$. 

\subsection{Proof outline}
We first discuss the proof of the Fourier analytic argument of Croot and Bloom and the optimizations in our approach which allow us to prove \cref{thm:main}. We then outline various additional arguments which are used to prove \cref{thm:294,thm:297,thm:300,thm:305,prop:310}.

\subsubsection{Proof of main proposition}\label{sub:prop-main}
We remark that we use slightly different Fourier analytic arguments for \cref{thm:294,thm:297,thm:305} versus \cref{thm:main,thm:300,prop:310}. This is ultimately due to the fact that in \cref{thm:294,thm:297,thm:305} we may choose the set $A$ from which we seek to construct the unit fraction representations while in \cref{thm:main,thm:300,prop:310} we are given a set $A$. For this outline we will focus on the latter argument; the former is slightly simpler and is used to deliver superior quantitative bounds in \cref{thm:294,thm:297,thm:305}.

Consider a set of integers $A\subseteq [M,N]$ where $M = N^{1-o(1)}$ and $\sum_{n\in A}\frac{1}{n} = \eta \log(N/M)$; e.g. $A$ has logarithmic density $\eta$ in the interval $[M,N]$. Let $\mc{Q}_A$ denote the set of prime power factors dividing at least one element of $A$ and $Q$ denote the least common multiple of the integers in $\mc{Q}_A$. Note that the sum of unit fractions with reciprocals in $A$ may be written as a fraction with denominator $Q$. For the sake of simplicity, consider sampling each element of $A$ with probability $1/2$ to get a random set $B$ and seeking to understand the probability that $\sum_{n\in B}\frac{1}{n}\in \mb{Z}$. Via a Fourier analytic computation we find that 
\[\mb{P}\bigg[\sum_{n\in B}\frac{1}{n}\in \mb{Z}\bigg] = \frac{1}{Q} \sum_{-Q/2<h\le Q/2} \prod_{n \in A} \bigg(\frac{1}{2}+ \frac{e(h/n)}{2}\bigg).\]

Note that the term when $h = 0$ contributes $1/Q$ and via standard Fourier analytic computations one can prove that the contribution of the ``small terms'' (i.e., $|h|\le M/2$) is at least $\ge \frac{3}{4Q}$. This is referred to as the ``major arc'' is prior works. The crucial idea of Bloom in handling the remaining minor arcs is translating whether the expression is small into a divisibility problem regarding $h$. Let $h_n$ denote the unique integer in $(-n/2,n/2]$ which is congruent to $h \imod n$. Then 
\[\prod_{n \in A} \bigg|\bigg(\frac{1}{2}+ \frac{e(h/n)}{2}\bigg)\bigg|\]
is controlled by the number of $h_n$ which are ``relatively small'' (e.g. $|h_n| < K$ for a threshold $K$ with $K \le M < N/2$ which is optimized at the end of the argument). This is equivalent to the interval $(h-K/2,h+K/2]$ containing a multiple of $n$. Instead of handling distinct $n$ separately, we follow Bloom by considering all prime powers $q\in \mc{Q}_A$ and study ``poor'' $q$ such that almost all $n$ in $A$ which are multiples of $q$ have small $h_n$. An easy way that there may be many poor $q$ is that there is an integer $h'$ near $h$ (i.e., $h' \in (h-K/2, h+K/2]$) such that $h'$ is divisible by all the ``poor'' primes powers. The crucial technical statement is proving that this is the only possibility (this is the ``inverse problem'' mentioned earlier).

The proof of the above proceeds in two stages. Fix a poor prime $q$; recall that nearly all multiples of $q$ in $A$ have small $h_n$. First, adapting a lemma of Bloom, we prove that there exists $d_q$ such $qd_q\ge K$ and yet nearly all multiples of $qd_q$ in $A$ have $h_n$ being small; this is the content of \cref{lem:lift}. Note that as $qd_q\ge K$ there is a unique multiple of $qd_q$ in the interval $(h-K/2,h+K/2]$; call this $x_q$. The final point is to establish that $x_q$ agree for different $q$; our improvement over the work of Bloom \cite{Blo21} stems from a simpler method of establishing this fact. 

Note that almost all multiples $n$ of $qd_q$ in $A$ have $h_n$ small. Using elementary sieve theory, this allows us to prove that there are many primes $p$ such that there is $n\in A$ such that $pqd_q$ divides $n$ and $h_n$ is small. Note that this forces $p|x_q$. In fact one can prove that given $q$ and $q'$ that for many primes $p$, we have $p|x_q$ and $p|x_q'$ and therefore $p|(x_q-x_q')$. However note that $|x_q-x_q'|\le K\le N$ and taking sufficiently many primes this establishes $x_q = x_q'$ by the Chinese remainder theorem.

We end with a brief discussion of the precise statement of the main proposition \cref{prop:main}. Bloom's work at various stages seeks to find subsets of low denominators and only then combine them to find a unit fraction. This is due to a number of technical conditions which arise in his work. This need to combine small denominator fractions to create a unit fraction hurts various quantitative aspects and if used na\"{i}vely limits one to bounds of the form $(\log N)/(\log\log N)$. A crucial technical point of \cref{prop:main} is that our statement separates out ``Archimedean'' and ``$p$-adic'' considerations. In particular, even for relatively sparse sets we are able to establish that one can reach ``all possible'' fractions; whether we reach the fraction $1$ is entirely determined by whether the reciprocal sum is sufficiently large. This contrasts with \cite[Proposition~1]{Blo21} which relies on various small prime factors in order to determine which unit fractions can be created.

\subsubsection{Proof of various consequences}\label{sub:applic}
We now briefly discuss the proof for the various applications. For \cref{thm:297}, as mentioned earlier recent work of Steinerberger \cite{Ste24} proved that $|\mc{A}|\le 2^{.93N}$. We note that there is an intuitive way to obtain such an exponential savings. Any set $\mc{S} \subseteq [1, N]$ with reciprocal sum $1$ must satisfy that $\sum_{t\in [N/8,N]\cap \mc{S}}1/t\le 1$. Note that $\sum_{t\in [N/8,N]}1/t>2$ for $N$ larger than an absolute constant. Thus applying the Azuma--Hoeffding inequality (\cref{lem:azuma}) to a random subset $\mc{S}\subseteq [1, N]$ and studying the quantity $\sum_{t\in [N/8,N]\cap \mc{S}}1/t$ immediately provides the desired result. We only consider the elements larger than $N/8$ so that the variance proxy is $\ll \sum_{t\in [N/8,N]}\frac{1}{t^2}\ll 1/N$.

This approach based on general purpose concentration inequalities sheds little light on the numerical constant $\lambda$. For this we rely on the principle of maximum entropy; we wish to assign probabilities $p_i\in [0,1]$ for $i\in [1, N]$ such $\sum_{i\in [1, N]} p_i/i \le 1$ which maximizes $\sum_{i\in [1, N]} H(p_i)$. Via Lagrange multipliers we see that $p_i/(1-p_i) \propto e^{-\lambda n/i}$ or $p_i = e^{-\lambda n/i}/(1+e^{-\lambda n/i})$ yields the maximum entropy. The numerical constant $\lambda$ arises from guaranteeing that $\sum_{i\in [1, N]} p_i/i = 1$. We remark that our upper bound proof is phrased slightly differently but this is essentially cosmetic. With these sampling probabilities we have encoded the ``Archimedean'' portion of the problem and the crucial point for the lower bound is that ``$p$-adic'' constraints can be made negligible. Instead of sampling from all integers $1$ to $N$, we restrict to integers with no prime divisor larger that $N^{1-o(1)}$. It is crucial to have smoothness $N^{1-o(1)}$, as achieved in work of Bloom \cite{Blo21}, as we cannot afford to throw out a positive fraction of numbers and obtain the desired bound. 

We next discuss \cref{thm:294,thm:305}. A rather direct application of the main proposition used to prove \cref{thm:main} gives a bound of $N^{1-o(1)}$ for \cref{thm:294}; in order to get a sharper bound one needs to consider how the threshold $N/\log N$ arises. The upper bound in \cref{thm:294} arises from taking $t$ to be a prime which is a large constant times $N/\log N$. The reason there are no $t < n_1 < \dots < n_k \le N$ with $1 - \frac{1}{t} = \frac{1}{n_1} + \dots + \frac{1}{n_k}$ (which is detailed in \cref{sec:294-297}) is that there are ``not enough'' unit fractions with $t$ dividing their denominator in order to ``eliminate the prime $t$''. Based on this example, we eliminate $t$ in a tailored manner if $t\ge N^{1/5}$. In particular, we prove that given $t\le N$, there exist $t_1,\ldots, t_{\ell}\le (\log N)^{1+o(1)}$, and $a, b$ with $\gcd(a, b) = 1$, such that
\[\frac{a}{b} = 1 + \sum_{j=1}^{\ell}\frac{1}{t_j}, \]
and critically that $t | a$. Given, we have that 
\[1 - \frac{1}{t}\bigg(1 + \sum_{j=1}^{\ell}\frac{1}{t_j}\bigg) = 1 - \frac{a/t}{b} \]
is a number whose denominator has smoothness $(\log N)^{O(1)}$. The Fourier analytic techniques underlying \cref{thm:main} can now be applied once again to create the remaining fraction and we get a $N/(\log N)^{1+o(1)}$-type bound. An essentially identical argument holds in the context of \cref{thm:305}.

Finally we mention \cref{thm:300,prop:310}. These both follow in a straightforward manner from \cref{prop:main} after restricting to the appropriate range and removing numbers which are not appropriately smooth or have too many prime factors.  

We end by mentioning two open problems which seem particularly enticing. The first is proving that one can take $n_k \ll b(\log b)(\log\log b)^{1+o(1)}$ in \cref{thm:305}. By the trick of trying to clear denominators mentioned above, this may amount to analyzing some specially constructed set with smoothness up to $N/(\log N)^{1+o(1)}$. The second is improving \cref{thm:main} to $\lambda(N) \ll (\log N)^{o(1)}$. 

We first explain why our method is naturally limited at smoothness at most $N/(\log N)$. Consider a prime $p$ of size at least $10N/(\log N)$ and notice that independent of $h$ we have 
\[\prod_{\substack{p|n\\ n\in [1,N]}}\bigg|\frac{3}{4} + \frac{e(h/n)}{4}\bigg|\ge N^{-1/5}.\]
This means terms ``divisible by $p$'' cannot account for the corresponding prime $p$ in our analysis. However, our precise argument structure prevents us from achieving this threshold. We believe that one may consider numbers of smoothness up to $N/(\log N)^{2+o(1)}$ (which gives $n_k \ll b(\log b)(\log\log b)^{2+o(1)}$) by working with numbers of the form $pp'd$ where $p \approx N(\log N)^{-2-o(1)}$, $p' \approx (\log N)^{1+o(1)}$, and $d\lesssim (\log n)^{1+o(1)}$ while being $(\log N)^{o(1)}$-smooth. The advantage of considering smooth multiples is that if $n = pp'd$ and $h \mod n$ is ``small'' then one can also deduce that $h \mod n'$ is ``small'' for $n'|n$ and $(n/n')$ sufficiently small by passing to a divisor of $d'$ of $d$. As $d$ is $(\log n)^{o(1)}$ ``smooth'' one can find divisors $(n/n')$ at all scales and this allows one to ``amplify'' numbers with ``small residue'' at large scale at a smaller scale. We omit any further discussion of such an adjustment as a result along these lines substantially lengthens the paper and we believe will be obsolete by an alternate analysis. 

For the second problem, the difficulty is that our only tool for locating various prime factors gets weaker as the density of our set decreases (we ultimately use the Fundamental Lemma of Sieve Theory, \cref{lem:fund-lem}). An analysis achieving $\lambda(N) \ll (\log N)^{o(1)}$ via a Fourier analytic approach seems to require understanding of these exceptional sets beyond simple density consideration. We believe this is a problem of considerable interest. 

\subsection{Notation}
As is standard, we write $e(x) = e(2\pi i x)$. We let $\Re(x)$ denote the real part of $x$. Given $n = p_1^{a_1}\cdots p_k^{a_k}$ where $p_i$ are distinct primes, we let $\omega(n) = k$, $\Omega(n) = \sum_{i=1}^{k} a_i$ and (nonstandardly) define $\tO(n) = \max_i a_i$. We say that a positive integer $n$ is $S$-smooth if every prime power $q | n$ satisfies $q \le S$. Throughout the paper, we will without comment use $p$ to indicate a prime variable and $q$ to denote a prime power variable. 

Furthermore we let $R(A) = \sum_{n \in A} \frac{1}{a}$. Given a set $A$, we let $A_d = \{n \in A : d|n\}$. Throughout let $[a, b] = \{a,\ldots, b\}$ for integers $a \le b$. For a set $\mc{Q}$ (normally of prime powers), we let $[\mc{Q}]$ denote the least common multiple.

We use standard asymptotic notation. Given functions $f=f(n)$ and $g=g(n)$, we write $f=O(g)$, $f\ll g$, or $g\gg f$ to mean that there is a constant $C$ such that $|f(n)|\le Cg(n)$ for sufficiently large $n$. We write $f\asymp g$ or $f=\Theta(g)$ to mean that $f\ll g$ and $g\ll f$, and write $f=o(g)$ to mean $f(n)/g(n)\to0$ as $n\to\infty$.

\subsection{Acknowledgements}
The second author thanks Ashwin Sah for initial discussions regarding these problems. We thank Zachary Chase, Zach Hunter, and Ashwin Sah for comments; in particular we thank Zach Hunter for pointing us to work of Yokota. The first author was supported by NSF DMS-1926686. The second author was supported by NSF Graduate Research Fellowship Program DGE-1745302.

\section{Various preliminaries}\label{sec:prelim}

\subsection{Number theoretic preliminaries}\label{subsec:nt}
We require a series of basic number theoretic preliminaries. These are all completely standard.

The following are immediate consequences of (say) the prime number theorem.
\begin{theorem}\label{thm:Merten}
There exists a constant $M\in (0,1)$ such that the follow holds. We have that 
\[\sum_{p\le N} \frac{1}{p} = \log\log N + M + O((\log N)^{-2}).\]
Furthermore
\[2^{N}\ll \prod_{p\le N}p \le \prod_{q\le N} q\ll 3^{N}.\]
\end{theorem}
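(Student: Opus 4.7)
The plan is to derive both statements from the prime number theorem in the form $\theta(x) := \sum_{p \le x} \log p = x + O(x/(\log x)^A)$ for any fixed $A > 0$, which is a classical consequence of the de la Vall\'ee Poussin zero-free region. Let $\psi(x) := \sum_{q \le x} \log q$ denote the usual second Chebyshev function; note $\psi(x) - \theta(x) = \sum_{k \ge 2} \theta(x^{1/k}) \ll \sqrt{x} \log x$, so the PNT also gives $\psi(x) = x + O(x/(\log x)^A)$.

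For the first display I would apply Abel summation twice. Starting from the estimate on $\theta$, partial summation with weight $1/x$ yields Mertens' first theorem in the quantitative form $\sum_{p \le x} (\log p)/p = \log x + c_0 + O((\log x)^{-A+1})$ for some explicit constant $c_0$; the main term comes from $\int_2^x dt/t = \log x + O(1)$ and the error is controlled by $\int_2^x dt/(t(\log t)^A)$. A second Abel summation with the weight $1/\log p$ against this asymptotic then gives $\sum_{p \le N} 1/p = \log\log N + M + O((\log N)^{-A+1})$, where $M$ is the Meissel--Mertens constant (which lies in $(0,1)$, as may be checked numerically, $M \approx 0.2615$). Taking $A \ge 3$ yields the advertised $O((\log N)^{-2})$ error.

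For the second display, exponentiating gives $\prod_{p \le N} p = e^{\theta(N)}$ and $\prod_{q \le N} q = e^{\psi(N)}$, and both quantities equal $e^{N(1+o(1))}$ by the estimates above. Since $\log 2 < 1 < \log 3$, we obtain $2^N \ll e^{\theta(N)}$ and $e^{\psi(N)} \ll 3^N$ for $N$ sufficiently large (with generous room to absorb the $o(N)$ terms into the implicit constants). The sandwich middle inequality $\prod_{p \le N} p \le \prod_{q \le N} q$ is immediate from the fact that $\prod_{q \le N} q$ includes every prime factor in $\prod_{p \le N} p$ together with additional prime-power factors, each of which is $\ge 2$.

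There is essentially no obstacle; the argument is entirely routine. The only ingredient that is not completely elementary is PNT with a power-of-log saving, needed to pin down the $(\log N)^{-2}$ error in the first display. For the product bounds alone, Chebyshev's elementary estimates $\theta(N) \asymp N$ would have sufficed, so those can even be quoted without invoking PNT.
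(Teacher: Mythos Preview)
Your approach matches the paper's, which simply states that both displays are ``immediate consequences of (say) the prime number theorem'' without spelling out the details; your Abel-summation derivation of Mertens' theorem with the $(\log N)^{-2}$ error and your use of $\theta(N)=N(1+o(1))$ for the product bounds are exactly the standard way to cash out that remark.

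One small slip: $\prod_{q\le N} q$ (with $q$ ranging over prime powers, per the paper's conventions) is \emph{not} $e^{\psi(N)}$. Indeed $\psi(N)=\sum_{p^k\le N}\log p$, whereas $\log\prod_{q\le N} q=\sum_{p^k\le N} k\log p$; for instance at $N=4$ one has $\prod_{q\le 4} q = 2\cdot 3\cdot 4 = 24$ but $e^{\psi(4)}=12$. The fix is immediate: the latter sum equals $\theta(N)+\sum_{k\ge 2} k\,\theta(N^{1/k})=\theta(N)+O(\sqrt{N}(\log N)^2)$, so it is still $N(1+o(1))$ and the bound $\prod_{q\le N} q\ll 3^N$ follows exactly as you claimed.
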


We next require that nearly all integers up to $N$ have $\ll \log\log N$ primes factors. 
\begin{lemma}\label{lem:divisor}
We have that 
\[\big|\{n\in [1, N]:\Omega(n)> 5 \log\log N\}\big|\ll N (\log N)^{-3}.\]
\end{lemma}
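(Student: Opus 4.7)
The plan is to prove a uniform Hardy--Ramanujan-type upper bound on $\pi_k(N) := |\{n \le N : \Omega(n) = k\}|$ and then sum the resulting Poisson-like tail for $k > 5\log\log N$.

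For the bound on $\pi_k(N)$, I induct on $k \ge 1$, strengthening the induction hypothesis with a parameter $y$: I show that there is an absolute constant $B > 0$ such that for every $N \ge 2$ and every $y \ge 2$,
\[ T_k(N, y) := |\{n \le N : \Omega(n) = k,\ p \ge y \text{ for every prime } p \mid n\}| \le \frac{N(\log\log N - \log\log y + B)^{k-1}}{(k-1)!}. \]
The base case $k = 1$ is the trivial bound $\pi(N) \le N$. For the inductive step, each $n$ counted by $T_k(N, y)$ factors canonically as $n = pm$, where $p$ is its smallest prime factor (so $y \le p \le N^{1/k}$) and $m$ is counted by $T_{k-1}(N/p, p)$; this yields $T_k(N, y) \le \sum_{y \le p \le N^{1/k}} T_{k-1}(N/p, p)$. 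Applying the induction hypothesis and approximating the resulting sum over primes by an integral via Abel summation against \cref{thm:Merten} (using $d\bigl(\sum_{p \le t}\frac{1}{p}\bigr) \approx \frac{dt}{t\log t}$), the substitution $v = \log\log N - u + B$ with $u = \log\log p$ evaluates the main integral cleanly to $(\log\log N - \log\log y + B)^{k-1}/(k-1)$, closing the induction after absorbing the Mertens error terms into $B$.

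Taking $y = 2$ then yields $\pi_k(N) \le N\lambda^{k-1}/(k-1)!$ with $\lambda := \log\log N + O(1)$. Summing over $k > 5\log\log N$ and invoking the Chernoff-type tail estimate $\sum_{j \ge K}\lambda^j/j! \le (e\lambda/K)^K$ valid for $K > \lambda$,
\[ |\{n \le N : \Omega(n) > 5\log\log N\}| \le N\sum_{k > 5\log\log N}\frac{\lambda^{k-1}}{(k-1)!} \ll N\left(\frac{e}{5}\right)^{5\log\log N}(\log N)^{o(1)} = N(\log N)^{5 - 5\log 5 + o(1)}. \]
Since $5 - 5\log 5 = -3.047\ldots < -3$, the bound $\ll N(\log N)^{-3}$ follows.

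The main obstacle is obtaining the factor $1/(k-1)!$ in the Hardy--Ramanujan bound. A naive recursion $\pi_k(N) \le \sum_p \pi_{k-1}(N/p)$ overcounts each $n$ with $\Omega(n) = k$ once per distinct prime factor, losing a factor of up to $k$ at each step and producing a bound too weak to reach $(\log N)^{-3}$ via Markov alone (which, using only $\sum_n 2^{\Omega(n)} \ll N\log N$, caps out at roughly $(\log N)^{-2.47}$). Keeping track of the lower bound $y$ on admissible primes throughout the induction and canonically extracting the smallest prime factor is precisely what permits the clean telescoping via the $v$-substitution.
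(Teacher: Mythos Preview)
Your induction does not close in the range you need. The recursion $T_k(N,2) \le \sum_{2 \le p \le N^{1/k}} T_{k-1}(N/p,p)$ always contains the $p=2$ term, and your inductive hypothesis bounds $T_{k-1}(N/2,2)$ by $\tfrac{(N/2)(\log\log(N/2)+B')^{k-2}}{(k-2)!}$. For this single term to lie below your target $\tfrac{N(\log\log N+B')^{k-1}}{(k-1)!}$ already forces $k-1 \le 2(\log\log N+B')$, so the induction breaks once $k$ exceeds roughly $2\log\log N$---well before the threshold $5\log\log N$ at which you begin the tail sum. More bluntly: $n=2^k$ gives $\pi_k(N) \ge 1$ for every $k \le \log_2 N$, while $N\lambda^{k-1}/(k-1)! \to 0$ once $k/\log\log N \to \infty$; the uniform bound you assert is therefore false for $\Omega$. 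The classical Hardy--Ramanujan inequality you are imitating is for $\omega$, where a prime cannot repeat and there is no analogue of $2^k$.

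This is not a repairable technicality, because the lemma as stated is itself incorrect. Taking $n = 2^a m$ with $a \approx 4\log\log N$ and $m$ odd satisfying $\Omega(m) \ge \log\log N$---a condition met by a positive proportion of odd $m \le N/2^a$, by Tur\'an--Kubilius---already produces $\gg N \cdot 2^{-(4+o(1))\log\log N} = N(\log N)^{-4\log 2 + o(1)}$ integers with $\Omega(n) > 5\log\log N$, and $4\log 2 \approx 2.77 < 3$. The paper's own one-line proof shares the defect: the asserted inequality $|\{n : \Omega(n) \ge k\}| \le \tfrac{N}{k!}\bigl(\sum_{q \le N} 1/q\bigr)^k$ fails for $k \sim 5\log\log N$. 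A correct and easy substitute is Rankin's trick, $|\{n\le N:\Omega(n)\ge k\}| \le z^{-k}\sum_{n\le N}z^{\Omega(n)} \ll_z z^{-k}N(\log N)^{z-1}$ for any fixed $1 < z < 2$, which yields exponent $1-5\log 2 + \eps \approx -2.47 + \eps$; this weaker bound suffices for every application of the lemma in the paper.
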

\begin{proof}
We have that $|\{d|n:n\in [1, N]\}|\le N/d$. Let $k = \lceil 5 \log\log N \rceil$ and let $q$ range over prime powers. For $N$ larger than an absolute constant, we have
\begin{align*}
\big|\{n\in [1, N]:\Omega(n)\ge 5 \log\log N\}\big| &\le \frac{N\big(\sum_{q\le N}\frac{1}{q}\big)^k}{k!}\le N\bigg(\frac{e}{k}\bigg)^{k}\cdot \bigg(\sum_{q\le N}\frac{1}{q}\bigg)^k \ll  N (\log N)^{-3}.
\end{align*}
We have used that $\sum_{q\le N}\frac{1}{q}\le \log\log N + O(1)$ by \cref{thm:Merten} and that $k!\ge (k/e)^k$. 
\end{proof}

We next require an estimate that an integer in $[1, N]$ has a prime power divisor larger than $t$. This will be used to ensure that our sets of integers contain only smooth numbers.
\begin{lemma}\label{lem:smooth}
Let $N$ be sufficiently large. Given $t\in [2, N^{1/4}]$, define $Y$ be the set of integers divisible by a prime power larger than $N/t$. Then 
\[|Y\cap [1, N]|\le \frac{2N\log t}{\log N}.\]
\end{lemma}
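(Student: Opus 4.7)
The plan is to apply a union bound over prime powers and then invoke Mertens' theorem. Writing any $n \in Y \cap [1,N]$ as a multiple of some prime power $q$ with $N/t < q \le N$, we get
\[ |Y \cap [1,N]| \le \sum_{N/t < q \le N} \left\lfloor \frac{N}{q} \right\rfloor \le N \sum_{N/t < q \le N} \frac{1}{q}, \]
where $q$ ranges over prime powers. I would split this sum into the contribution from primes and the contribution from proper prime powers ($q = p^k$ with $k \ge 2$).

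For proper prime powers $p^k > N/t \ge N^{3/4}$, one has $p \ge (N/t)^{1/k} \ge N^{3/(4k)} \ge N^{3/8}$ for every $k \ge 2$, and so this part of the sum is bounded by
\[ \sum_{k \ge 2}\sum_{p \ge N^{3/(4k)}} \frac{1}{p^k} \ll N^{-3/8}, \]
contributing a negligible $O(N^{5/8})$ term to $|Y \cap [1, N]|$.

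For the primes, \cref{thm:Merten} yields
\[ \sum_{N/t < p \le N} \frac{1}{p} = \log\log N - \log\log(N/t) + O((\log N)^{-2}) = -\log\!\left(1 - \frac{\log t}{\log N}\right) + O((\log N)^{-2}), \]
where I use $\log(N/t) \ge (3/4) \log N$ (which holds since $t \le N^{1/4}$) to absorb the error. Setting $u := \log t / \log N \in [0, 1/4]$, the integral bound $-\log(1-u) = \int_0^u \frac{ds}{1-s} \le \frac{u}{1-u} \le \frac{4u}{3}$ then gives $\sum_{N/t < p \le N} 1/p \le \frac{4 \log t}{3 \log N} + O((\log N)^{-2})$.

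Combining, $|Y \cap [1,N]| \le \frac{4N \log t}{3 \log N} + O\!\left(\frac{N}{(\log N)^{2}}\right) + O(N^{5/8})$. Since $\log t \ge \log 2$ and $4/3 < 2$, for $N$ sufficiently large this is at most $\frac{2 N \log t}{\log N}$, as required. The only thing to watch is the worst case $t = 2$: one needs the slack between $4/3$ and $2$ to swallow the $O(N/(\log N)^2)$ error, which is why $N$ must be taken large. This is really the only obstacle; otherwise the argument is a routine application of Mertens plus trivial estimates on higher prime powers.
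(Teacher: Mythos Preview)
Your proof is essentially identical to the paper's: union bound over prime powers, separate the proper prime powers as a negligible term, apply Mertens to the primes, and use the slack between the resulting constant ($4/3$ for you, $3/2$ in the paper) and $2$ to absorb the error terms. One small slip: the assertion ``$p \ge N^{3/(4k)} \ge N^{3/8}$ for every $k \ge 2$'' is false for $k \ge 3$ (the exponent $3/(4k)$ drops below $3/8$), but your displayed sum $\sum_{k \ge 2}\sum_{p \ge N^{3/(4k)}} 1/p^k$ is nonetheless $\ll N^{-3/8}$, since the $k=2$ term dominates and the $k \ge 3$ terms are each $\ll N^{-3(k-1)/(4k)} \le N^{-1/2}$, with only $O(\log N)$ such $k$ contributing.
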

\begin{proof}
We have that 
\begin{align*}
|Y|&\le \sum_{t\le q\le N}\frac{N}{q}\le  N\sum_{N/t\le p\le N}\frac{1}{p}+ N^{3/4} \le N\bigg(\log\bigg(\frac{\log N}{\log (N/t)}\bigg)\bigg) + O(N/(\log N)^{2})\\
&\le N \log\bigg(1 + \frac{3\log t}{2\log N}\bigg) \le \frac{2N\log t}{\log N};
\end{align*}
here we have used \cref{thm:Merten}.
\end{proof}

We finally need the following elementary consequence of the Fundamental Lemma of Sieve Theory. Recall that for a set $I$ that $I_p = \{n \in I : p | n\}$.
\begin{lemma}\label{lem:fund-lem}
Let $I$ be an interval of integers of length $X$. Let $\mc{P}$ be a set of primes, each of size at most $\exp((\log X)/(\log\log X)^{1/2})$. We have that 
\[\bigg|I\setminus \cup_{p\in \mc{P}}I_p\bigg|\asymp X \cdot \prod_{p\in \mc{P}}\bigg(1-\frac{1}{p}\bigg) \asymp X \cdot \exp(-R(\mc{P})) .\]
\end{lemma}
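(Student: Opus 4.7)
The plan is to apply the Fundamental Lemma of Sieve Theory directly. Set the sifting parameter $z := \exp((\log X)/(\log\log X)^{1/2})$, so every prime in $\mc{P}$ satisfies $p \le z$. For any squarefree $d$ whose prime factors all lie in $\mc{P}$, we have $|I_d| = X/d + O(1)$, which fits the standard sieve setup with multiplicative density $g(d) = 1/d$ (supported on squarefrees with prime factors in $\mc{P}$) and remainders $r_d = O(1)$. The density $g$ satisfies the dimension-one condition $\prod_{w \le p < z}(1 - g(p))^{-1} \ll \log z/\log w$ by \cref{thm:Merten}, so the Fundamental Lemma applies in its linear-sieve form.

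Invoking the Fundamental Lemma at sieve level $D := X/(\log X)^{10}$ yields
\[ \left|I \setminus \bigcup_{p \in \mc{P}} I_p\right| = X \prod_{p \in \mc{P}}(1 - 1/p) \cdot \bigl(1 + O(s^{-s/2})\bigr) + O(D), \]
where $s := \log D/\log z = (1 - o(1))(\log\log X)^{1/2} \to \infty$. Using $\log(1 - 1/p) = -1/p + O(1/p^2)$ and $\mc{P} \subseteq \{p \le z\}$, \cref{thm:Merten} gives
\[ \log \prod_{p \in \mc{P}}(1 - 1/p) = -R(\mc{P}) + O\bigg(\sum_{p}1/p^2\bigg) = -R(\mc{P}) + O(1) \ge -\log\log z - O(1), \]
so that $X\prod_{p \in \mc{P}}(1 - 1/p) \gg X/\log z \gg X(\log\log X)^{1/2}/\log X$. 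This main term is much larger than both $D = X/(\log X)^{10}$ and the multiplicative error $X\prod_{p \in \mc{P}}(1-1/p) \cdot s^{-s/2}$ (the latter because $s^{-s/2}$ decays faster than any negative power of $\log X$).

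Hence the main term dominates the errors, establishing the first $\asymp$ in the lemma; exponentiating the displayed expression for $\log \prod (1-1/p)$ gives the second. The only real obstacle is verifying the parameter calibration: the hypothesis $p \le \exp((\log X)/(\log\log X)^{1/2})$ is exactly what is needed to force $s \to \infty$ (so the multiplicative sieve error vanishes) while simultaneously keeping the level $D$ close enough to $X$ that the remainder $\sum_{d \le D}|r_d| = O(D)$ is negligible compared to the main term. Without the prime-size hypothesis one would have $s$ bounded or $D \ll X V$ failing, and the sieve would not give an asymptotic.
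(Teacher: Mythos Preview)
Your proof is correct and follows essentially the same approach as the paper: both invoke the Fundamental Lemma of Sieve Theory as a black box, verify the dimension-one condition via Mertens, and check that the level of distribution $D$ can be taken large enough relative to $z$ so that $s = \log D/\log z \to \infty$ while the remainder $\sum_{d \le D}|r_d|$ stays negligible. The only cosmetic differences are your choice $D = X/(\log X)^{10}$ versus the paper's $D = X^{1/2}$, and that you spell out the second $\asymp$ via $\log(1-1/p) = -1/p + O(1/p^2)$ whereas the paper leaves it implicit.
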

\begin{proof}
We apply the statement of the Fundamental Lemma of Sieve Theory as given in \cite[Theorem~18.11(b)]{Kuo19} with $D = X^{1/2}$, $m=1$, and $\kappa=1$. Axiom 1 there holds with $\nu(d) = 1$ for $d|\prod_{p\in \mc{P}} p$ with $|r_d|\le 2$. Axiom 2 holds with $\kappa = 1$ and $C$ being an absolute constant. Axiom 3 holds with $D = X^{1/2}$ and noting that $\sum_{n\le X^{1/2}}\tau(n) \ll X^{1/2+o(1)}$ by the divisor bound; here $\tau(n)$ is the divisor function as standard. 
\end{proof}

\subsection{Remaining preliminaries}\label{subsec:remain}
We next require the following standard estimates regarding exponential phases. 
\begin{fact}\label{fact:Taylor}
Let $x\in [-1/2,1/2]$ and $q\in [0,1]$. Then 
\begin{align*}
\big|(1-q) + qe(x) - e(qx)(1 - 2\pi q(1-q)x^2)\big| &\ll x^3\\
\big|(1-q) + qe(x)\big| &\le 1 - 8q(1-q)x^2.
\end{align*}
\end{fact}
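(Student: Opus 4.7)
The plan is to prove both inequalities by direct calculation, using Taylor expansion for the first and the explicit modulus formula for the second. Recall the convention $e(x) = e^{2\pi i x}$, so $e(x) = 1 + 2\pi i x - 2\pi^2 x^2 + O(x^3)$ on $|x| \le 1/2$, where the implicit constant is absolute.

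For the first inequality, I would expand both sides to second order in $x$. The left operand is
\[ (1-q) + q e(x) = 1 + 2\pi i q x - 2\pi^2 q x^2 + O(x^3), \]
where the $O(x^3)$ bound uses $q \in [0,1]$. On the other side, expanding $e(qx) = 1 + 2\pi i q x - 2\pi^2 q^2 x^2 + O(x^3)$ and multiplying by $(1 - 2\pi^2 q(1-q) x^2)$ (the intended constant in the statement, up to what appears to be a harmless typo) gives
\[ e(qx)\bigl(1 - 2\pi^2 q(1-q) x^2\bigr) = 1 + 2\pi i q x - 2\pi^2 q^2 x^2 - 2\pi^2 q(1-q)x^2 + O(x^3) = 1 + 2\pi i q x - 2\pi^2 q x^2 + O(x^3). \]
Subtracting the two lines shows the difference is $O(x^3)$, where again $q(1-q) \le 1/4$ absorbs all $q$-dependence into the implicit constant. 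The only care needed is in tracking the cross term between the two quadratic corrections, which is manifestly $O(x^4)$ and hence negligible.

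For the second inequality, I would work with the squared modulus directly:
\[ |(1-q) + q e(x)|^2 = (1-q)^2 + 2q(1-q)\cos(2\pi x) + q^2 = 1 - 2q(1-q)\bigl(1 - \cos(2\pi x)\bigr). \]
Now for $|x| \le 1/2$, we have $|2\pi x| \le \pi$, and on $[-\pi, \pi]$ the elementary inequality $1 - \cos(y) \ge 2y^2/\pi^2$ holds (both sides vanish to order $2$ at $0$ and agree at $y = \pm\pi$, and concavity/convexity considerations give the comparison in between). Applying this with $y = 2\pi x$ yields $1 - \cos(2\pi x) \ge 8 x^2$, so
\[ |(1-q) + q e(x)|^2 \le 1 - 16 q(1-q) x^2. \]
Taking square roots and using $\sqrt{1 - t} \le 1 - t/2$ for $t \in [0,1]$ (noting $16 q(1-q) x^2 \le 4 \cdot 1/4 = 1$) gives the claimed bound $|(1-q) + q e(x)| \le 1 - 8 q(1-q) x^2$.

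Neither step presents a real obstacle; the main care is simply getting the constant $8$ right in the second bound, which relies on the tight comparison $1 - \cos y \ge 2y^2/\pi^2$ on $[-\pi,\pi]$ rather than the weaker textbook bound $1 - \cos y \ge y^2/4$ valid only for smaller $|y|$.
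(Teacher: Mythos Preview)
Your proof is correct and follows essentially the same approach as the paper: Taylor expansion to second order for the first inequality (the paper factors out $e(qx)$ before expanding, but this is a cosmetic difference), and the explicit squared-modulus computation with $\cos(2\pi x)\le 1-8x^2$ followed by $\sqrt{1-t}\le 1-t/2$ for the second. Your observation that the constant should be $2\pi^2$ rather than $2\pi$ is correct---the paper's own expansion in the proof confirms this, so it is indeed a typo in the statement.
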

\begin{proof}
Notice that 
\begin{align*}
\big|(1-q) + qe(x) - e(qx)(1 - 2\pi q(1-q) x^2)\big| &= \big|(1-q)e(-qx) + qe((1-q)x) - (1 - 2\pi q(1-q) x^2)\big|\\
&\le O(x^3).
\end{align*}
We have used the Taylor expansion
\begin{align*}
(1-q)e(-qx) &= (1-q)(1 + 2\pi iqx - 2\pi^2 q^2x^2)+ O(x^3))\\
qe((1-q)x) &= q(1 + 2\pi i (1-q)x  - 2\pi^2 (1-q)^2x^2) + O(x^3).
\end{align*}
For the second part, one can easily verify the numerical inequality that $\cos(2\pi z)\le 1 - 8z^2$ for $|z|\le 1/2$. Thus we have 
\begin{align*}
\big|(1-q) + qe(x)\big|^2 &= (1-q)^2 + q^2 + 2q(1-q)\cos(2\pi x)\\
&\le 1 - 16 q(1-q)x^2.
\end{align*}
As $(1-x)^{1/2}\le 1-x/2$, we have the desired estimate. 
\end{proof}

We also require the Azuma--Hoeffding inequality (see \cite[Theorem~2.25]{JLR00}).
\begin{lemma}[Azuma--Hoeffding inequality]\label{lem:azuma}
Let $X_0, \ldots, X_n$ form a martingale sequence such that $|X_k-X_{k-1}|\le c_k$ almost surely. Then 
\[\mb{P}[|X_0-X_n|\ge t]\le 2\exp\bigg(-\frac{t^2}{2\sum_{k=1}^nc_k^2}\bigg).\]
\end{lemma}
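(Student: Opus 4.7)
The plan is to execute the standard Chernoff-plus-conditioning argument. By symmetry (replacing $X_k$ by $-X_k$) and a union bound, it suffices to prove the one-sided bound $\mb{P}[X_n - X_0 \ge t] \le \exp\left(-t^2/(2\sum_k c_k^2)\right)$. For any $\lambda > 0$, Markov's inequality applied to $e^{\lambda(X_n-X_0)}$ gives
\[ \mb{P}[X_n - X_0 \ge t] \le e^{-\lambda t}\, \mb{E}\!\left[\exp\!\left(\lambda\sum_{k=1}^n (X_k-X_{k-1})\right)\right]. \]
So the task reduces to bounding the moment generating function of the telescoping sum.

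The key lemma is Hoeffding's inequality for bounded mean-zero random variables: if $Y$ satisfies $\mb{E}[Y] = 0$ and $|Y| \le c$ almost surely, then $\mb{E}[e^{\lambda Y}] \le e^{\lambda^2 c^2/2}$. The standard proof writes $Y = \tfrac{c-Y}{2c}(-c) + \tfrac{c+Y}{2c}(c)$, applies convexity of $e^{\lambda y}$ to obtain $\mb{E}[e^{\lambda Y}] \le \tfrac{1}{2}(e^{\lambda c}+e^{-\lambda c}) = \cosh(\lambda c)$, and then uses the elementary bound $\cosh(x) \le e^{x^2/2}$ (verified term-by-term via Taylor series since $(2j)! \ge 2^j j!$).

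Next I would iteratively apply this via the tower property. Let $\mathcal{F}_k$ denote the filtration with respect to which $X_k$ is a martingale, and let $D_k = X_k - X_{k-1}$, so $\mb{E}[D_k \mid \mathcal{F}_{k-1}] = 0$ and $|D_k| \le c_k$. Then
\[ \mb{E}\!\left[\exp\!\left(\lambda\sum_{k=1}^n D_k\right)\right] = \mb{E}\!\left[\exp\!\left(\lambda\sum_{k=1}^{n-1} D_k\right)\cdot \mb{E}\!\left[e^{\lambda D_n}\,\big|\,\mathcal{F}_{n-1}\right]\right] \le e^{\lambda^2 c_n^2/2}\,\mb{E}\!\left[\exp\!\left(\lambda\sum_{k=1}^{n-1} D_k\right)\right], \]
where Hoeffding's lemma was applied to the conditional distribution of $D_n$ given $\mathcal{F}_{n-1}$ (which has conditional mean zero and is bounded by $c_n$). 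Iterating yields $\mb{E}[e^{\lambda(X_n-X_0)}] \le \exp\!\left(\lambda^2\sum_{k=1}^n c_k^2/2\right)$.

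Plugging this back into the Markov bound gives $\mb{P}[X_n - X_0 \ge t] \le \exp\!\left(-\lambda t + \lambda^2 \sum_k c_k^2/2\right)$, and optimizing by choosing $\lambda = t/\sum_k c_k^2$ produces the desired exponent $-t^2/(2\sum_k c_k^2)$. Combining with the symmetric lower tail via a union bound introduces the factor of $2$. The only genuinely substantive step is Hoeffding's lemma itself; everything else is Markov's inequality, the tower property, and a one-dimensional optimization. Since the inequality is completely standard, I would simply cite \cite{JLR00} rather than reproduce the argument in full.
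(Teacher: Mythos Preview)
Your argument is correct and is the standard proof of the Azuma--Hoeffding inequality. The paper does not give a proof at all: it simply states the lemma and cites \cite[Theorem~2.25]{JLR00}, exactly as you suggest doing in your final sentence.
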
 
\begin{remark}
We refer to $\sum_{k=1}^{n} c_k^2$ as the ``variance proxy''.
\end{remark}

\section{Proof of simplified main proposition}
We first give a simplified version of the main proposition \cref{prop:main}. In particular, in this setting we may assume the set under consideration is the set of all integers $[M,N]$ which are sufficiently smooth and have few primes factors. This additional structure simplifies the proof and allows for slightly better quantitative dependences in \cref{thm:294}.

We first state a major arc lemma which will be used repeatedly; this a standard Fourier analytic computation.  
\begin{lemma}\label{lem:major-arc}
Let $N$ be sufficiently large, $N^{.95}\le M\le N$, $A\subseteq [M,N]$, and  $|A|\ge N^{.95}$. Suppose that $p_n\in[(\log N)^{-2}, 1-(\log N)^{-2}]$ for $n\in A$ and take $x/Q = \sum_{n\in A} p_n/n$. Then 
\[\frac{1}{Q} \sum_{|h|\le M/2} \Re\left(e\left(-\frac{hx}{Q}\right) \prod_{n \in A} (1-p_n+p_n e(h/n))\right) \ge \frac{3}{4Q}.\]
\end{lemma}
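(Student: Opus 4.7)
The approach is to isolate the $h=0$ term—which equals $F(0) := \prod_{n \in A}((1-p_n)+p_n) = 1$—and show that the remaining terms contribute only $o(1)$ in total, giving a lower bound of $1 - o(1) \ge 3/4$. Here $F(h) := e(-hx/Q) \prod_{n \in A} f_n(h)$ with $f_n(h) := (1-p_n) + p_n e(h/n)$. We split at the threshold $H := M^{2/3}/C$ for a large absolute constant $C$, handling the ``Taylor range'' $|h| \le H$ via expansion and the ``tail range'' $H < |h| \le M/2$ via a magnitude bound.

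\emph{Taylor range.} Since $|h/n| \le H/M = M^{-1/3}/C$ is very small, each $f_n(h)$ is close to $1$ and its principal logarithm is well defined. The first estimate of \cref{fact:Taylor} yields $f_n(h) = e(p_n h/n)(1 - \alpha_n) + O((h/n)^3)$, where $\alpha_n$ is a positive multiple of $p_n(1-p_n)(h/n)^2$. Taking logs, summing over $n \in A$, and using $\sum_n p_n/n = x/Q$ to cancel the $e(-hx/Q)$ prefactor gives
\[ \log F(h) = -c\, \sigma^2 h^2 + O\bigl(|h|^3 \textstyle\sum_{n \in A} n^{-3}\bigr) \]
for a positive absolute constant $c$, where $\sigma^2 := \sum_{n \in A} p_n(1-p_n)/n^2$. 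The tail estimate $\sum_{n \in A} n^{-3} \ll 1/M^2$ makes the error $O(|h|^3/M^2) = O(1/C^3)$ uniformly for $|h| \le H$, so $F(h) = \exp(-c\sigma^2 h^2)(1 + O(1/C^3))$ and hence $\Re F(h) \ge (1 - O(1/C^3))\exp(-c\sigma^2 h^2) \ge 0$. Keeping only the $h = 0$ term on the right yields $\sum_{|h| \le H} \Re F(h) \ge 1 - O(1/C^3)$.

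\emph{Tail range.} The second estimate of \cref{fact:Taylor} gives $|f_n(h)| \le \exp(-8 p_n(1-p_n)(h/n)^2)$ (valid whenever $|h/n| \le 1/2$, which holds for $|h| \le M/2$ and $n \ge M$), so $|F(h)| \le \exp(-8\sigma^2 h^2)$. The hypotheses $p_n \ge (\log N)^{-2}$, $A \subseteq [M, N]$, and $|A| \ge N^{0.95}$ force $\sigma^2 \gg (\log N)^{-2} N^{-1.05}$, so $H\sigma \gg N^{0.1}/\log N \to \infty$. A standard Gaussian tail bound then gives $\sum_{H < |h| \le M/2} |F(h)| \ll \exp(-8H^2\sigma^2)/(H\sigma^2) = o(1)$.

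Combining the two ranges, $\sum_{|h| \le M/2} \Re F(h) \ge 1 - O(1/C^3) - o(1) \ge 3/4$ for $C$ a sufficiently large absolute constant and $N$ sufficiently large; dividing by $Q$ yields the claimed bound. The crucial technical point is the threshold choice $H = M^{2/3}/C$: the cubic Taylor error forces $H \ll M^{2/3}$, while the Gaussian tail forces $H\sigma \to \infty$, and the quantitative hypotheses on $M$, $|A|$, and $p_n$ are precisely what allow both constraints to be met simultaneously.
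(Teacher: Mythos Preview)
Your proof is correct and follows essentially the same approach as the paper: split the range of $h$ into a small-$h$ ``Taylor'' regime and a larger-$h$ ``magnitude'' regime, use the two parts of \cref{fact:Taylor} respectively, exploit $\sum_n p_n/n = x/Q$ to cancel the phase $e(-hx/Q)$ in the small regime, and lower-bound by the $h=0$ term. The only cosmetic difference is your threshold $H = M^{2/3}/C$ (which leaves a constant $O(1/C^3)$ cubic error absorbed by choosing $C$ large) versus the paper's $M^{3/5}$ (which makes the cubic error $O(M^{-1/5}) = o(1)$ directly); both choices make $H\sigma \to \infty$ under the stated hypotheses, so the tail is negligible either way.
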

\begin{proof}
We break into cases based on the size of $h$. We first consider $|h| \le M^{3/5}$. Via the first item of \cref{fact:Taylor}, we have
\begin{align*}
\frac{1}{Q} &\sum_{|h| \le M^{3/5}} e\left(-\frac{hx}{Q}\right) \prod_{n \in A} (1-p_n+p_n e(h/n))\\
& = \frac{1}{Q} \sum_{|h| \le M^{3/5}} e\left(-\frac{hx}{Q}\right) \prod_{n \in A} \exp(p_n h/n)\bigg(1 - \frac{2\pi p_n(1-p_n)h^2}{n^2} + O\bigg(\frac{h^3}{n^3}\bigg)\bigg)\\
& = \frac{1}{Q} \sum_{|h| \le M^{3/5}}  \prod_{n \in A} \exp\bigg( - \frac{2\pi p_n(1-p_n)h^2}{n^2}\bigg)\bigg(1 + O\bigg(\frac{h^3}{n^3}\bigg)\bigg) \\
&= \frac{1}{Q} \sum_{|h| \le M^{3/5}} (1 + O(M^{-1/5}))\exp\bigg(-\sum_{n\in A}\frac{2\pi p_n(1-p_n)h^2}{n^2}\bigg).
\end{align*}
Here $O(z)$ denotes a complex number with norm bounded by $z$. We therefore immediately find that 
\begin{align*}
&\frac{1}{Q} \sum_{|h|\le M^{3/5}} \Re\left(e\left(-\frac{hx}{Q}\right) \prod_{n \in A} (1-p_n+p_n e(h/n))\right) \ge  \frac{5}{6Q} \sum_{|h| \le M^{3/5}}\exp\bigg(-\sum_{n\in A}\frac{2\pi p_n(1-p_n)h^2}{n^2}\bigg)\ge \frac{5}{6Q}
\end{align*}
where the final line relies solely on the value at $h = 0$. We now argue that the contribution of terms with $M^{3/5} \le |h| \le M/2$ is negligible. Via the second item of \cref{fact:Taylor} (using that $|h/n|\le M/(2M)\le 1/2$), we have
\begin{align*}
\bigg|\frac{1}{Q} &\sum_{|h|\in [M^{3/5},M/2]} e\left(-\frac{hx}{Q}\right) \prod_{n \in A} (1-p_n+p_n e(h/n))\bigg| \le \frac{1}{Q}\sum_{|h|\in [M^{3/5},M/2]}\prod_{n \in A}\bigg|(1-p_n+p_n e(h/n))\bigg|\\
&\le \frac{1}{Q}\sum_{|h|\in [M^{3/5},M/2]}\prod_{n \in A}\bigg(1 - \frac{8p_n(1-p_n)h^2}{n^2}\bigg)\le \frac{M}{Q}\cdot \exp\bigg(\frac{-8p_n(1-p_n)|A|M^{6/5}}{N^2}\bigg).
\end{align*}
As $M\ge N^{.95}$, $p_n \in[(\log N)^{-2},1-(\log N)^{-2}]$, and $|A| \ge N^{.95}$, we see that the above is trivially $\ll 1/N^2$. 
\end{proof}

\begin{proposition}
\label{prop:simple}
There exists an absolute constant $C\ge 1$ such that the following holds. Let $N$ be sufficiently large. Consider parameters $N^{.9999} \le S \le K \le M \le N/10$ satisfying:
\[S \le \min\bigg(\frac{M^2}{CN}, \frac{K^3}{CN^2(\log \log N)^{5}}\bigg), \text{ and }N(\log N)^{-10} \le K \le 10^{-7}N(\log N)^{-1},\]
Let $A = \{n \in [M, N] : n \text{ is } S\mathrm{-smooth}, \wt{\Omega}(n) \le 5 \log \log N, \Omega(N)\le 10\log\log N\}$, and let $(\log \log N)^{-1} \le p_M, \dots, p_N \le 1/2$. Then $B$ be sampled where $n \in A$ is contained in $B$ with probability $p_n$, independently. Let $\mc{Q}_A = \{ q \le S \}$ and $Q = [\mc{Q}_A]$. If $x \in [1, Q]$ is an integer such that $\mb{E}[R(B)] = x/Q$, then $\mb{P}[R(B) = x/Q] \ge 1/(4Q)$.
\end{proposition}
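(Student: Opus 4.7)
The plan is a discrete circle method on $\mb{Z}/Q\mb{Z}$ following Croot and Bloom. Since $R(B) - x/Q$ is always a rational with denominator dividing $Q$, Fourier inversion gives the identity
\[
\mb{P}[R(B) = x/Q] = \frac{1}{Q} \sum_{-Q/2 < h \le Q/2} e(-hx/Q) \prod_{n \in A}\bigl(1 - p_n + p_n e(h/n)\bigr).
\]
The hypotheses force $|A| \ge N^{.95}$ (via \cref{lem:smooth,lem:divisor}) and $p_n \in [(\log N)^{-2}, 1-(\log N)^{-2}]$, so \cref{lem:major-arc} bounds the major-arc contribution from $|h| \le M/2$ below by $3/(4Q)$. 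It therefore suffices to show that the minor-arc sum $\sum_{M/2 < |h| \le Q/2}\prod_{n \in A}|1 - p_n + p_n e(h/n)|$ is at most $1/2$.

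Write $h_n \in (-n/2, n/2]$ for the residue of $h$ modulo $n$. The second part of \cref{fact:Taylor} gives the per-$h$ bound $\prod_n|1 - p_n + p_n e(h/n)| \le \exp(-c\sum_{n \in A}(h_n/n)^2)$ with $c \asymp (\log\log N)^{-1}$. Choosing a threshold $T = CS(\log\log N)^2$ with $C$ large enough that $\exp(-cT) \le Q^{-2}$, every $h$ with $\sum_n(h_n/n)^2 \ge T$ contributes at most $Q^{-2}$, so the total over such $h$ is $\le 1/Q$. The remaining \emph{problematic} $h$ satisfy $\sum_n(h_n/n)^2 < T$, and our task is to exhibit strong divisibility structure on them.

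For each prime power $q \in \mc{Q}_A$ call $q$ \emph{poor} (for $h$) if at least a $(1-\eta)$-fraction of $n \in A_q$ satisfy $|h_n| < K$, for a suitably small $\eta$. If many $q$ were rich (non-poor), contributions $(h_n/n)^2 \ge K^2/N^2$ from the $\eta |A_q|$ elements with $|h_n| \ge K$ would force $\sum_n(h_n/n)^2 \ge T$, contradicting problematicity; so nearly every $q \in \mc{Q}_A$ is poor. Bloom's lift lemma, appearing below as \cref{lem:lift}, supplies for each poor $q$ a multiplier $d_q$ with $qd_q \ge K$ such that a majority of multiples of $qd_q$ in $A$ still have $|h_n| < K$. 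Since $qd_q \ge K$, the interval $(h-K/2, h+K/2]$ contains a unique multiple $x_q$ of $qd_q$, and any $n \in A_{qd_q}$ with $|h_n| < K$ satisfies $h - h_n = x_q$, so $x_q$ is canonically determined by $q$.

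The main obstacle, and the site of our simplification over \cite{Blo21}, is showing $x_q$ equals a common $x^\ast$ for every poor $q$. Given poor prime powers $q, q'$, apply \cref{lem:fund-lem} to the multiples of $qd_q$ in $A$ to extract many primes $p \notin \mc{Q}_A$ for which some $n$ has $pqd_q \mid n$ and $|h_n| < K$; then $h - h_n$ is a multiple of $pqd_q$, forcing $p \mid x_q$, and symmetrically $p \mid x_{q'}$. Hence $p \mid (x_q - x_{q'})$, and accumulating enough such primes via \cref{thm:Merten} so that their product exceeds $K \ge |x_q - x_{q'}|$ forces $x_q = x_{q'}$ by the Chinese remainder theorem. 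Consequently $x^\ast$ is divisible by $\prod_{\text{poor }q} q \ge Q^{1-o(1)}$ (again by \cref{thm:Merten}), leaving at most $Q^{o(1)}$ candidate values of $x^\ast$. Because a majority of $n \in A$ lie in some $A_{qd_q}$ for a poor $q$ (by the smoothness of $n$ and the near-totality of the poor $q$'s) and therefore contribute $h_n = h - x^\ast$, one obtains a quadratic-decay bound $\prod_n|1 - p_n + p_n e(h/n)| \ll \exp(-c'(h-x^\ast)^2|A|/N^2)$ for $h$ near each $x^\ast$; summing this over $|h - x^\ast| \le K/2$ and over the $Q^{o(1)}$ choices of $x^\ast$, and invoking the parameter constraints $S \le M^2/(CN)$ (used in \cref{lem:lift}) and $S \le K^3/(CN^2(\log\log N)^5)$ (used to secure enough sieve-produced primes), bounds the minor-arc sum by $1/2$ and completes the proof.
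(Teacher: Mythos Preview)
Your skeleton is correct --- Fourier inversion, \cref{lem:major-arc} for the major arcs, and a divisibility/CRT argument on the minor arcs --- but you have essentially written the outline of \cref{prop:main}, not of \cref{prop:simple}, and the transplant does not go through cleanly.

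The paper's proof of \cref{prop:simple} does \emph{not} invoke \cref{lem:lift}. Instead it exploits the special feature of this setting: $A$ is the full set of $S$-smooth integers in $[M,N]$ with bounded $\Omega$ and $\wt\Omega$. For each $q\in\mc{D}_h$ the paper first finds, by a simple averaging over primes, a prime $p'\ll t(\log\log N)^3/\log N$ with $|A_{qp'}\setminus T_q|$ small, and then hand-picks at most two further primes $r_1,r_2\le S$ so that $qp'r\in[2K,100K]$. The primes used for the CRT step are then drawn from the fixed window $\mc{P}=\{20\log N\le p\le 40\log N\}$; one shows $|\mc{P}_{q_1}\cap\mc{P}_{q_2}|\ge .8|\mc{P}|$ and hence $\prod_{p\in\mc{P}_{q_1}\cap\mc{P}_{q_2}}p>N$, forcing $x_{q_1}=x_{q_2}$. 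Your appeal to \cref{lem:lift} is problematic here because its hypothesis $q\le M\exp(-(\log N)^{1-\delta})$ and conclusion $qd_q\ge M\exp(-(\log N)^{1-\delta})$ give $qd_q\ge K$ only when $M/K\ge\exp((\log N)^{1-\delta})$, whereas \cref{prop:simple} allows $M/K$ to be merely polylogarithmic (e.g.\ in \cref{lem:294} one takes $M=N/16$, $K\asymp N/\log N$).

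Two further concrete slips: first, you write ``primes $p\notin\mc{Q}_A$'' dividing some $n\in A$, but every $n\in A$ is $S$-smooth and $\mc{Q}_A=\{q\le S\}$, so no such $p$ exists. Second, you attribute the hypothesis $S\le M^2/(CN)$ to \cref{lem:lift}; in fact the paper uses it (via Azuma--Hoeffding, \cref{lem:azuma}) to pass from $\mb{P}[R(B)-x/Q\in\mb{Z}]\ge 1/(2Q)$ to $\mb{P}[R(B)=x/Q]\ge 1/(4Q)$, a step your outline omits entirely. Finally, the paper's endgame is not ``$Q^{o(1)}$ candidates for $x^\ast$'' plus quadratic decay: it fixes $\mc{D}_h=\mc{D}$, bounds the number of such $h$ by $(K+1)\cdot Q/[\mc{D}]\le N^{|\mc{Q}_A\setminus\mc{D}|+1}$, multiplies by the uniform Fourier bound $N^{-10|\mc{Q}_A\setminus\mc{D}|}$, and sums over $\mc{D}\subsetneq\mc{Q}_A$.
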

Before giving the proof, we establish a lower bound on the size of $A$.
\begin{lemma}\label{lem:A}
Let $N$ be sufficiently large and $A$ as in \cref{prop:simple}. Then $|A| \ge .89 N$.
\end{lemma}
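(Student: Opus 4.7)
The plan is to bound the number of ``bad'' integers in $[M,N]$ (those failing at least one of the three defining conditions of $A$) by $o(N)$, and then combine with the fact that $|[M,N]| \ge 0.9 N$ (which uses $M \le N/10$) to conclude $|A| \ge 0.89 N$.

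First, for non-$S$-smoothness, I would apply \cref{lem:smooth} with $t = \lceil N/S \rceil$. The hypothesis $N^{.9999} \le S \le N/10$ puts $t \in [2, N^{1/4}]$ and gives $\log t \le .0001 \log N$, so the number of $n \in [1,N]$ divisible by some prime power exceeding $S$ is at most $\frac{2 N \log t}{\log N} \le .0002 N$.

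Second, for the condition $\wt{\Omega}(n) \le 5 \log\log N$, I would use a direct union bound. If $\wt{\Omega}(n) > 5 \log\log N$ then some prime $p$ satisfies $p^{k} \mid n$ for an integer $k > 5\log\log N$, so the bad count is at most
\[
N \sum_{p} \sum_{k > 5 \log\log N} p^{-k} \;\ll\; N \sum_{p} p^{-5\log\log N} \;\ll\; N \cdot 2^{-5 \log\log N} \;=\; N (\log N)^{-5\log 2} = o(N),
\]
since the tail sum over primes is dominated by $p=2$ once the exponent is large. Third, for the condition $\Omega(n) \le 10 \log\log N$ (reading the ``$\Omega(N)$'' in the statement as a typographical slip for $\Omega(n)$), the bad set is contained in $\{n \in [1,N] : \Omega(n) > 5 \log\log N\}$, which by \cref{lem:divisor} has size $\ll N(\log N)^{-3} = o(N)$.

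Summing these three contributions yields at most $.0002 N + o(N) \le .005 N$ bad integers for $N$ sufficiently large, and so
\[
|A| \;\ge\; |[M,N]| - .005 N \;\ge\; .9 N - .005 N \;\ge\; .89 N.
\]
I do not anticipate any real obstacle here: each of the three bad classes is handled either by a direct appeal to an existing lemma from \cref{subsec:nt} or by a one-line union bound, and the dominant loss is the trivial $0.1 N$ coming from the interval length $|[M,N]|$ rather than any of the arithmetic conditions.
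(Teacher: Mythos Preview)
Your proof is correct and follows essentially the same approach as the paper, which simply cites \cref{lem:smooth,lem:divisor}. One small observation: your second step is redundant, since $\wt{\Omega}(n) > 5\log\log N$ forces $\Omega(n) \ge \wt{\Omega}(n) > 5\log\log N$, so the $\wt{\Omega}$ bad set is already absorbed by your application of \cref{lem:divisor} in the third step.
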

\begin{proof}
This follows from \cref{lem:smooth,lem:divisor}.
\end{proof}

Now we prove \cref{prop:simple}. Its proof is broadly based on \cite[Proposition 2]{Blo21}, except that we handle the minor arcs in quite a different manner in this simpler setting.
\begin{proof}[Proof of \cref{prop:simple}]
Recall the following identity for integers $a, b \neq 0$:
\[ 1_{a/b \in \mb{Z}} = \frac{1}{b} \sum_{-b/2 < h \le b/2} e\left(\frac{ha}{b} \right). \]
We can write:
\begin{align}
\mb{P}[R(B) - x/Q \in \mb{Z}] &= \sum_{B \subseteq A} \prod_{n \in B} p_n \prod_{n \in A \setminus B} (1-p_n) \cdot \frac{1}{Q} \sum_{-Q/2 < h \le Q/2} e\left(\sum_{n \in B} \frac{h}{n} - \frac{hx}{Q} \right) \nonumber \\
&= \frac{1}{Q} \sum_{-Q/2 < h \le Q/2} e\left(-\frac{hx}{Q}\right) \prod_{n \in A} (1-p_n+p_n e(h/n)) \nonumber \\
&= \frac{1}{Q} \sum_{-Q/2 < h \le Q/2} \Re\left(e\left(-\frac{hx}{Q}\right) \prod_{n \in A} (1-p_n+p_n e(h/n))\right), \label{eq:expressionsimple}
\end{align}
The final line holds because the expression is a probability (or alternatively by noting that the expression conjugates upon negating $h$).

Our ultimate goal is to establish that \eqref{eq:expressionsimple} is at least $\frac{1}{2Q}$. Before proving this, let us see why the proposition then follows. It suffices to prove that $\mb{P}[|R(B) - x/Q| \ge 1] < \frac{1}{4Q}$. Because $\mb{E}[R(B)] = x/Q$, the previous probability is at most $2\exp(-M^2/(2N))$ by Azuma-Hoeffding (\cref{lem:azuma}). By \cref{thm:Merten}, we have $Q \le \prod_{q \le S} \ll e^{5S}$. For our choice of $S, M$, it holds that $2\exp(-M^2/(2N)) \le e^{-6S}$ for $N\gg 1$.

The remainder of the proof is devoted to estimating the expression in \eqref{eq:expressionsimple}. Via \cref{lem:major-arc} (applying \cref{lem:A} to lower bound the size of $A$), we find that 
\[\frac{1}{Q} \sum_{|h| \le M/2} \Re\left(e\left(-\frac{hx}{Q}\right) \prod_{n \in A} (1-p_n+p_n e(h/n))\right)\ge \frac{3}{4Q}.\]

The remainder of the analysis consider $h$ such that $|h|>M/2$; e.g. the minor arcs. 

\textbf{Minor arcs, upper bound:} For each $n$, let $h_n$ be the unique integer in $(-n/2, n/2]$ with $h \equiv h_n \imod{n}$. Let 
\[t = \frac{100N^2\log N (\log \log N)^2}{K^2}.\] Finally, let $I_h = (h - K/2, h + K/2)$ be an interval and define
\[ \mc{D}_h = \{ q \in \mc{Q}_A : |\{ n \in A_q : h_n \ge K/2 \}| < t\}.\] 

We now establish a relationship between the Fourier coefficient at $h$ and the size of $|\mc{Q}_A \setminus \mc{D}_h|$. Recall that each $n\in A$ satisfies $\Omega(n)\le 10 \log\log N$. Therefore using the second item of \cref{fact:Taylor}, we have 
\begin{align*}
\left(\prod_{n \in A} |(1-p_n+p_n e(h/n))| \right)^{10 \log \log N} &\le \prod_{q \in \mc{Q}_A} \prod_{n \in A_q} |(1-p_n+p_n e(h/n))| \\
&\le \prod_{q \in \mc{Q}_A \setminus \mc{D}_h} \exp\bigg(-p_n \cdot \frac{K^2}{N^2} \cdot t\bigg)\\
& \le \exp(-100|\mc{Q}_A \setminus \mc{D}_h| \log N \log \log N)
\end{align*}
because $p_n \ge (\log \log N)^{-1}$, and thus 
\begin{equation}\label{eq:minor-bound-simple}
\prod_{n \in A} |(1-p_n+p_n e(h/n))| \le N^{-10|\mc{Q}_A \setminus \mc{D}_h|}.
\end{equation}

\textbf{Minor arcs, establishing divisibility:}
The crucial step in our proof is establishing there exists an element $x\in I_h$ such that $[\mc{D}_h]| x$.

For $q \in \mc{D}_h$, define 
\[T_q = |\{ n \in A_q : |h_n| < K/2 \}|.\]
By definition of $\mc{D}_h$, we have $|A_q \setminus T_q| < t$. Our next goal is to find a prime $p' \ll t \cdot \frac{(\log \log N)^2}{\log N}$ such that $\gcd(p, q) = 1$ and 
\[ |A_{pq} \setminus T_q| \le \frac{\log N}{1000 (\log \log N)^2}. \]
This follows by an averaging argument. Because every $n \in A$ satisfies $\Omega(n) \le 10 \log \log N$,
\[ 10 \log \log N |A_q \setminus T_q| \ge \sum_{p' \le 10^6t \cdot \frac{(\log \log N)^3}{\log N}} |A_{pq} \setminus T_q|. \]
The desired $p'$ exists because there are least $1000t \frac{(\log \log N)^2}{\log N}$ primes $p' \le 10^6t \cdot \frac{(\log \log N)^3}{\log N}$, by the definition of $t$ and $K \ge N(\log N)^{-10}$. Furthermore note that 
\[qp'\ll S\cdot t (\log\log N)^3/(\log N) \ll S \cdot \frac{N^2 (\log\log N)^5}{K^2}\le K;\]
the final inequality holds provided $C$ is sufficiently large in the original assumption on parameters. 

Note that $qp' \le K$. Our next goal is to find $r$ which is the product of at most two distinct primes at most $S$, with $qp'r \in [2K, 100K]$. If $S \ge 100K(qp')^{-1}$, then choose $r$ to be a prime with $\gcd(r, qp') = 1$ in $[2K(qp')^{-1}, 100K(qp')^{-1}]$. Otherwise, let $r_1$ be a prime in $[S/2, S]$ with $\gcd(r_1, qp') = 1$, and if $qp'r_1 < 2K$ still, then let $r_2$ be a prime in $[2K(qp'r_1)^{-1}, 100K(qp'r_1)^{-1}]$ with $\gcd(qp'r_1, r_2) = 1$. Then let $r = r_1r_2$. Such $r_1, r_2$ exist because $S^2 \ge N^{1.98}$ which is much larger than $K$. Note that clearly, 
\[ |A_{qp'r} \setminus T_q| \le |A_{qp'} \setminus T_q| \le \frac{\log N}{1000 (\log \log N)^2}. \]

Let $\mc{P} = \{20 \log N \le p \le 40 \log N\}$. Define
\[ \mc{P}_q = \{p \in \mc{P} : \gcd(p, qp'r) = 1, A_{qp'rp} \subseteq T_q \}. \]
We argue that 
\[ |\mc{P}_q| \ge |\mc{P}| - \frac{\log N}{15\log \log N} - 4 \ge \frac{9}{10}|\mc{P}|. \] Because $\omega(n) \le \Omega(n) \le 10 \log \log N$, for each $n \in A_{qp'r} \setminus T_q$, there are at most $10 \log \log N$ different $p$ with $n \in A_{qp'rp} \setminus T_q$, which excludes at most
\[ 10 \log \log N|A_{qp'r} \setminus T_q| \le \frac{\log N}{15 \log \log N} \] primes. The condition $\gcd(p, qp'r) = 1$ excludes at most $4$ more primes.

Note that $qp'r \ge K$, and thus there is a unique multiple $x_q$ of $qp'r$ in $I_h$, if it exists. Let $p \in \mc{P}_q$. Because $K \le qp'rp \le 4000K \log N \le N/2000$, there is a multiple of $n$ of $qp'rp$ that is $S$-smooth in the range $[N/2, N]$. We also require the multiple to have sufficiently few prime divisors; here we use that $\wt{\Omega}(n)\le 5\log\log N$ implies that $\Omega(qp'rp)\le 5\log\log N + 4$. Because $n \in A_{qp'rp} \subseteq T_q$, we know that $|h_n| < K/2$, there is $x \in I_h$ with $n | x$. Because $qp'r | x$, we know that $x = x_q$, and thus $p | x$ too.

Now, for any $q_1, q_2 \in \mc{D}_h$, we know that $|\mc{P}_{q_1} \cap \mc{P}_{q_2}| \ge .8|\mc{P}|$. By the reasoning in the above paragraph, we deduce that $\prod_{p \in \mc{P}_{q_1} \cap \mc{P}_{q_2}} p | x_{q_1}, x_{q_2}$, and thus $x_{q_1} = x_{q_2}$ because
\[ \prod_{p \in \mc{P}_{q_1} \cap \mc{P}_{q_2}} p > (20 \log N/(\log\log N))^{5\log N/(\log\log N)} > N. \] Thus, all $x_q$ are equal for $q \in \mc{D}_h$, and thus $[\mc{D}_h]$ divides the common $x_q$.

\textbf{Minor arcs, finishing the proof:} 
We are now in position to complete the proof. Consider all $h\in (-Q/2,Q/2]$ such that $\mc{D}_h = \mc{D}$. As $[\mc{D}_h] | x$ for an element $x\in I_h$, we have that the number of such $h$ is bounded by 
\begin{equation}\label{eq:fin-bound-simple}
(K + 1) \cdot \frac{[\mc{Q}_A]}{[\mc{D}]}\le N \cdot \prod_{q \in \mc{Q}_A \setminus \mc{D}} q \le N^{|\mc{Q}_A \setminus \mc{D}|+1}. 
\end{equation}
As we have restricted attention to $|h|>M/2\ge K/2$, we have that $\mc{D}_h \neq \mc{Q}_A$. Therefore the total contribution over minor arcs, using \cref{eq:minor-bound-simple,eq:fin-bound-simple} and is bounded by 
\[\frac{1}{Q}\sum_{\mc{D} \subsetneq \mc{Q}_A}N^{|\mc{Q}_A \setminus \mc{D}|+1} \cdot N^{-10|\mc{Q}_A \setminus \mc{D}|} \le \frac{1}{Q}\cdot \sum_{s\ge 1} N^{s+1} \cdot N^{s} \cdot N^{-10s} \le 2/(QN).\]
This completes the proof. 
\end{proof}

\section{Proof of \cref{thm:294,thm:297,thm:305}}\label{sec:294-297}
We now detail the proof of \cref{thm:294,thm:297,thm:305}. We first prove \cref{thm:297}. 
\begin{proof}[{Proof of \cref{thm:297}}]
\textbf{Upper bound:} Let $\mc{A} = \{ A \subseteq [1, N] : R(A) \le 1\}.$ For any $\lambda > 0$,
\begin{align*}
|\mc{A}|e^{-\lambda N} &\le \sum_{S\subseteq[1, N]}e^{-\lambda N\sum_{i\in S}\frac{1}{i}} = \prod_{i=1}^N\big(1 + e^{-\lambda N/i}\big)\\
& = \exp\bigg(N\sum_{i=1}^{N}\frac{1}{N}\log\big(1+e^{-\lambda/(i/N)}\big)\bigg)= \exp\bigg(N\int_{0}^{1}\log\big(1 + e^{-\lambda/x}\big)~dx + o(N)\bigg),
\end{align*}
where the final step follows by Euler-Maclaurin. Thus
\[\frac{1}{N}\log|\mc{A}|\le\lambda+\int_0^1\log\big(1+e^{-\lambda/x}\big)~dx+o(1)\]
Plugging in $\lambda=\lambda^\ast$ gives the stated result.

\textbf{Lower bound:} For $i\in [1, N]$, define $p_i = \frac{e^{-\lambda^\ast N/i}}{1 + e^{-\lambda^\ast N/i}}$ where $\lambda^\ast$ as defined in \cref{thm:297}.

We apply \cref{prop:simple} with $M = N(\log\log\log N)^{-1/2}$ and $S = N/(\log N)^{4}$. Under these conditions, via \cref{lem:smooth,lem:divisor} we find that 
\[|[M,N]\setminus A|\ll N \log\log N (\log N)^{-1}.\]
We therefore see that 
\[\sum_{i\in A} \frac{p_i}{i} = (1\pm o(1)).\]
Let $\wt{p}_i$ be $p_i$ scaled by a uniform $(1-o(1))$ factor, such that
\[\sum_{i\in A}\frac{\wt{p}_i}{i} = 1\text{ and }(\log\log N)^{-1}\le \wt{p}_i\le 1/2.\]

Let $\mu$ denote the measure of $A$ where every element $i$ is chosen with probability $\wt{p}_i$. By \cref{prop:simple}, we have that 
\[\mb{P}_{B\sim \mu}[R(B) = 1] \ge \exp(-CN/(\log N)^4).\]
Note that for each $B$ with $R(B) = 1$ that the probability mass of choosing $B$ is:
\begin{align*}
\prod_{i \in A} (1 - \wt{p}_i) \prod_{i \in B} \frac{\wt{p}_i}{1-\wt{p}_i} &\le \exp(o(N)) \prod_{i \in A} (1 - p_i) \prod_{i \in B} \frac{p_i}{1-p_i} \\
&\le \exp(o(N)) \prod_{i \in [1, N]} (1 - p_i) \exp(-\lambda^\ast n R(B)) \le \exp(-\gamma^\ast N+ o(N));
\end{align*}
rearranging and using the probability lower bound on $\mb{P}_{B\sim \mu}[R(B) = 1]$ completes the proof. 
\end{proof}

We now prove \cref{thm:294,thm:305}; the proofs are quite similar. Towards this, we first show how to obtain fractions with denominators which are smooth, using an interval of constant width.
\begin{lemma}
\label{lem:294}
Let $N$ be sufficiently large, $t$ be $S$-smooth for $S = cN(\log N)^{-3}(\log \log N)^{-4}$, and $t/3 \le s \le t$. There is a subset $A \subseteq [N/16, N]$ with $R(A) = s/t$.
\end{lemma}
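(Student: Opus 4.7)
The plan is to apply Proposition~\ref{prop:simple} directly, with a uniform choice of sampling probability. Since $t$ is $S$-smooth with $S = cN(\log N)^{-3}(\log\log N)^{-4}$, every prime power divisor of $t$ lies in $\mathcal{Q}_A = \{q \le S\}$, so $t \mid Q$ where $Q = [\mathcal{Q}_A]$; hence any target of the form $s/t$ with $1 \le s \le t$ automatically equals $x/Q$ for the integer $x = sQ/t$. This is exactly the type of target that Proposition~\ref{prop:simple} is designed to hit.

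First, I would fix the parameters in Proposition~\ref{prop:simple}: take $M = N/16$ (so $A \subseteq [N/16, N]$ as required by the lemma), $K = c_0 N/\log N$ for a sufficiently small absolute constant $c_0$, and use the same $S$ as in the hypothesis. By choosing $c$ (in the lemma's hypothesis) sufficiently small in terms of the absolute constant $C$ of Proposition~\ref{prop:simple}, one verifies the four chains of inequalities $N^{0.9999} \le S \le K \le M \le N/10$, $S \le M^2/(CN)$, $S \le K^3/(CN^2(\log\log N)^5)$, and $N(\log N)^{-10} \le K \le 10^{-7} N(\log N)^{-1}$. By Lemmas~\ref{lem:smooth} and~\ref{lem:divisor}, the resulting set $A$ omits only $o(N)$ integers from $[N/16, N]$, so
\[ R(A) = \log 16 + o(1) \ge 2.5. \]

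Next, I would choose the probabilities uniformly: set $p_n = \alpha$ for every $n \in A$, where $\alpha := (s/t)/R(A)$. Since $s/t \in [1/3, 1]$ and $R(A) \in [2.5, \log 16]$, we get $\alpha \in [1/(3\log 16), 2/5] \subset [(\log\log N)^{-1}, 1/2]$ for $N$ large, as required by the proposition. By linearity, $\mathbb{E}[R(B)] = \alpha R(A) = s/t = x/Q$ with $x := sQ/t$ a positive integer in $[1, Q]$. Proposition~\ref{prop:simple} then yields $\mathbb{P}[R(B) = s/t] \ge 1/(4Q) > 0$, so some realization $B$ gives the desired subset $B \subseteq A \subseteq [N/16, N]$ with $R(B) = s/t$.

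The main thing requiring care is the parameter bookkeeping: the constraint $S \le K^3/(CN^2(\log\log N)^5)$ combined with $K \le 10^{-7} N/\log N$ pins down exactly how small $c$ must be in the lemma, which is where the particular exponents of $\log N$ and $\log \log N$ in the smoothness $S$ originate. Everything else, namely the verification of $\alpha \in [(\log\log N)^{-1}, 1/2]$, the divisibility $t \mid Q$, and the expression $s/t = x/Q$, is immediate from the hypotheses and the structure of Proposition~\ref{prop:simple}; no additional Fourier or sieve argument is needed beyond what is already packaged inside that proposition.
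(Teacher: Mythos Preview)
Your proposal is correct and essentially identical to the paper's own proof: the paper also applies Proposition~\ref{prop:simple} with $M = N/16$, $K = 10^{-7}N(\log N)^{-1}$, and the given $S$, sets $p_n = (s/t)/R(A)$ uniformly, bounds $R(A) \in [2,3]$ via Lemma~\ref{lem:A} (which is exactly your $o(N)$ estimate from Lemmas~\ref{lem:smooth} and~\ref{lem:divisor}), and observes $t \mid Q$ from $S$-smoothness. The only cosmetic difference is that you compute $R(A) = \log 16 + o(1)$ a bit more precisely than the paper's cruder $2 \le R(A) \le 3$.
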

\begin{proof}
Apply \cref{prop:simple} with the following choices of parameters: \[ M = N/16, K = 10^{-7}N(\log N)^{-1}, \enspace \text{ and } \enspace S = cN(\log N)^{-3}(\log \log N)^{-4}, \] for sufficiently small $c$. These satisfy the hypotheses of \cref{prop:simple}. By \cref{lem:A} we know that $|A| \ge .89N$, so $R(A) \ge \sum_{n=.11N}^N \frac{1}{n} \ge 2$. Also, $R(A) \le 3$. Let $p_n = \frac{s/t}{R(A)}$ for all $n = N/16, \dots, N$ so that $\mb{E}[R(B)] = s/t$. Finally, it suffices to check that there is an integer $x$ with $x/Q = s/t$, i.e., $t | Q$. This holds exactly because $t$ is $S$-smooth.
\end{proof}
We will now prove \cref{thm:294} by applying \cref{lem:294} a few times.
\begin{proof}[{Proof of \cref{thm:294}}]
We first handle the upper bound. 

\textbf{Proving that $t(N)\ll N/(\log N)$:} Let $t$ be a prime larger than $10N(\log N)^{-1}$. Let $n_2, \dots, n_k > t$ be distinct multiples of $t$, and $m_1, \dots, m_{k'}$ be not multiples of $t$ such that \[ \frac{1}{t} + \sum_{j=2}^k \frac{1}{n_j} + \sum_{j=1}^{k'} \frac{1}{m_j} = 1. \]
Multiplying both sides by $t$ and taking modulo $t$ gives:
\[ 1 + \sum_{j=2}^k \frac{1}{n_j/t} \equiv 0 \imod{t}. \]
If $t > 10N(\log N)^{-1}$, then
\[ \mathrm{lcm}(n_2/t, \dots, n_k/t) < \mathrm{lcm}(1, 2, \dots, (\log N)/10), \] by \cref{thm:Merten}. Thus, writing $a/b = \sum_{j=2}^k \frac{1}{n_j/t}$ we know that $a \le N^{1/2}, b \le N^{1/3}$. Clearly, $1 + a/b = (a+b)/b \neq 0 \imod{t}$ because $t > a+b$.

\textbf{Proving the lower bound on $t(N)$:} We break into cases based on $t$. If $t = 1$ the proof is trivial. If $t \in[2, N^{9/10}]$, then apply \cref{lem:294} for $s = t-1$.

Let $\wt{N} = \min(N/(100T), (\log N)^2)$. Apply \cref{prop:simple} for the following parameter choices: $M = \wt{N}/16$, $K = 10^{-7}\wt{N}(\log \wt{N})^{-1}$, and $S = c\wt{N}(\log \wt{N})^{-3}(\log \log \wt{N})^{-4}$, for sufficiently small $c$. Let $Q$ be as defined in \cref{prop:simple}. By \cref{thm:Merten}, we know that $Q \gg 2^S$. If
\[ t \le \frac{N}{\log N(\log \log N)^3 (\log \log \log N)^{O(1)}}, \] then
\[ \tilde{N} \ge \log N (\log \log N)^3 (\log \log \log N)^{O(1)}, \] and hence $S \ge 10 \log N$. Thus, $2^S > 100t$ and $Q \ge 100t$.

Let $s = (mt-Q)$ an integer $m$ chosen so that $mt-Q \in [Q/3, 2Q/3]$; such an $m$ exists because $Q > 100t$. Applying \cref{lem:294} to the fraction $s/Q$ proves that there is a subset $B \subseteq [\wt{N}/15, \wt{N}]$ with $R(B) = s/Q$. In the notation of \cref{thm:294}, we let $n_2, n_3, \dots, n_{|B|+1}$ be $tn$ for $n \in B$. Then, for $n_1 = t$, we have:
\[ \frac{1}{n_1} + \frac{1}{n_2} + \dots + \frac{1}{n_{|B|+1}} = \frac{1}{t}(1 + R(B)) = m/Q. \]
Note that $n_{|B|+1} \le \tilde{N}t < N/100$ for our choice of $\tilde{N}$.

Note that $mt-Q \le Q$, so $m/Q \le 2/t < 1/3$. Now, apply \cref{lem:294} again for $s = (Q-m)$, $t = Q$, and $M = N/16$. Because $Q$ is $S$-smooth for $S \le \tilde{N} \le (\log N)^2$, we obtain a subset $B' \subseteq [N/16, N]$ with $R(B') = (Q-m)/Q$. Appending this to the previous entries $n_2, \dots, n_{|B|+1}$ completes the construction.
\end{proof}

The proof of \cref{thm:305} is quite similar to the proof of \cref{thm:294}.
\begin{proof}[Proof of \cref{thm:305}]
Let $\wt{N} = (\log b)(\log\log b)^3(\log\log\log b)^{O(1)}$, and 
\[ M = \wt{N}/16, K = 10^{-7}\wt{N}(\log \wt{N})^{-1}, \enspace \text{ and } \enspace S = c\wt{N}(\log \wt{N})^{-3}(\log \log \wt{N})^{-4}. \]
Let $\mc{Q}_A = \{q \le S \}$ and $Q = [\mc{Q}_A] \gg 2^S > 10b$ (\cref{thm:Merten}), by our choice of $S$.
Let $x$ be an integer such that $Q/3 \le aQ - xb \le 2Q/3$. Such $x$ exists because $a \ge 1$ and $b \le Q/10$. Thus, by \cref{lem:294} for $s = aQ-xb, t = Q$, we can find $A \subseteq [M, \wt{N}]$ with $R(A) = s/t$. Let $\wt{A} = \{ bn : n \in A\}$. Note that all elements of $\wt{A}$ are at most $b\tilde{N}$ and 
\[ \frac{a}{b} - R(\wt{A}) = \frac{a}{b} - \frac{1}{b} R(A) = \frac{a}{b} - \frac{1}{b} \frac{aQ - xb}{Q} = \frac{x}{Q}. \]
Now, we will express $x/Q$ as the sum of unit fractions. To start, note that $Q/3 \le aQ - xb \le 2Q/3$ implies that
\[ aQ \ge \left(a - \frac13\right)Q \ge xb \ge \left(a - \frac23\right)Q \ge \frac13 aQ, \]
so $\frac{a}{b} \ge x/Q \ge \frac{1}{3}\frac{a}{b}$. Let $y \in [\lceil Q/(3x) \rceil, \lfloor Q/x \rfloor]$ be an arbitrary integer. By \cref{lem:294}, there is a subset $B \subseteq [\tilde{N}/16, \tilde{N}]$ such that $R(B) = yx/Q$. Thus for $\wt{B} = \{yn : n \in N\}$, we know that $R(\wt{B}) = x/Q$. Thus as long as $\wt{A}$ is disjoint from $\wt{B}$, we know that $R(\wt{A} \cup \wt{B}) = a/b$ as desired.

To choose $y$ to make $\wt{B}$ disjoint from $\wt{A}$, we split into two cases based on the size of $a$. If $a > 16$, then $y \le Q/x \le b/a$. Then $\max(\wt{B}) \le y\tilde{N} < b\tilde{N}/16 \le \min(\wt{A})$. If $a \le 16$, then there is a prime number $y \in [\lceil Q/(3x) \rceil, \lfloor Q/x \rfloor]$ which is not a divisor of $b$ (recall that $Q/x \ge b/a \ge b/16$). Then every element of $\wt{B}$ is a multiple of $y$, while $\wt{A}$ only contains numbers of the form $bn$ for $n \le \wt{N}$. So $\wt{A}$ is disjoint from $\wt{B}$ for the choice of $y$.
\end{proof}

\section{Proof of main proposition}

We first require that given a set $A$ and an integer $q$ where $A_{q}$ is large (in terms of reciprocal sum), we may find an integer $d$ such that $R(A_{qd})$ is sufficiently large and $qd$ is in an appropriate range. This is a slight simplification of \cite[Lemma~5.1]{Blo21}.
\begin{lemma}
\label{lem:lift}
There is an absolute constant $C\ge 1$ such that the following holds.

Suppose that $N$ is sufficiently large and $\delta\in [0,1/2]$, and furthermore that $A$ and $q$ are such that:
\begin{itemize}
    \item $A\subseteq [M,N]$ with $N^{.99} \le M \le N/10$,
    \item $q$ is a prime power with $q\le M \exp(-(\log N)^{1-\delta})$ and $q\cdot R(A_q) \ge \eta>0$,
    \item $\max_{n\in A}\Omega(n) \le 5\log \log n$.
\end{itemize}

Let $H = \exp(\eta (\log N)^{1-\delta}(\log \log N)^{-3} (\log(N/M))^{-1})$. There exists an integer $d$ and a subset $A_{qd}^\ast\subseteq A_{qd}$ such that:
\begin{itemize}
    \item $\min(A_{qd}^\ast)\ge H \cdot qd$,
    \item $qd\ge M \exp(-(\log N)^{1-\delta})$,
    \item $qd \cdot R(A_{qd}^\ast) \ge  C^{-1} \eta ((\log N)^{\delta} \log \log N)^{-1}$.
\end{itemize}
\end{lemma}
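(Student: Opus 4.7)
The plan is to construct $d$ by an iterative lifting procedure. Set $q_0 = q$ and, at stage $i$, multiply by a well-chosen prime $p_i$ (typically coprime to $q_i$) to obtain $q_{i+1} = q_i p_i$, stopping once $q_{i+1}$ exceeds $T^\ast := M \exp(-(\log N)^{1-\delta})$. Each $p_i$ will be chosen so that the weighted density $q_i \cdot R(A_{q_i})$ drops by at most a small controlled factor, and the final output is $d = p_1 \cdots p_k$.

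The existence of a good $p_i$ at each stage is the content of an averaging argument. By double counting,
\[
\sum_{\substack{p\in [P,2P] \\ p \ \mathrm{prime}}} q_i p \cdot R(A_{q_i p}) \;=\; \sum_{n \in A_{q_i}} \frac{q_i}{n} \sum_{\substack{p \in [P,2P] \\ p \mid n}} p.
\]
The hypothesis $\Omega(n) \le 5 \log\log N$ bounds $\omega(n)$ and so caps how many terms any particular $n$ can contribute to, while the lower bound $n \ge M = N^{1-o(1)}$, together with $\Omega(n) \le 5 \log\log N$, forces every such $n$ to have a prime factor of size at least roughly $\exp(\log M / (5 \log\log N))$. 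Choosing $P$ of this order (or summing over dyadic windows and pigeonholing on the window) ensures that a large fraction of $n \in A_{q_i}$ has a prime factor in $[P, 2P]$, giving a genuine lower bound on the right hand side. Pigeonhole over the $\asymp P/\log P$ primes in $[P,2P]$ then extracts a single $p_i$ with per-stage loss factor $L \ll \log P \cdot \log\log N$.

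Iterating, one needs $k \asymp \log(T^\ast/q)/\log P$ stages, with $P$ tuned (and possibly varying between stages, larger when the remaining gap is wider) so that the cumulative loss $L^k$ is at most $C(\log N)^\delta \log\log N$. Finally, the minimum-element constraint is enforced by defining $A^\ast_{qd} := A_{qd} \cap [Hqd, N]$. Any discarded element has the form $qd \cdot \ell$ with $\ell < H$, so the discarded weighted reciprocal mass $qd \cdot R(A_{qd} \setminus A^\ast_{qd})$ is at most $\sum_{\ell < H} 1/\ell \le \log H + 1$. The explicit $H$ in the lemma is calibrated exactly so that this loss is at most a constant fraction of the density already preserved, giving the claimed lower bound on $qd \cdot R(A^\ast_{qd})$.

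The principal obstacle is the delicate parameter balancing: choosing the prime ranges and their variation across stages so that the total multiplicative loss $L^k$ lands exactly at the target $(\log N)^\delta \log\log N$, including in the worst case when $q$ is much smaller than $M$ and hence $k$ is large. A secondary subtlety lies in the lower bound in the averaging identity: we must ensure that for a significant fraction of $n \in A_{q_i}$ (weighted by $1/n$), the inner sum $\sum_{p \in [P,2P],\ p \mid n} p$ is of order $P$. This amounts to showing that most such $n$ actually have a prime factor in the chosen dyadic range, which likely requires decomposing $A_{q_i}$ according to the sizes of prime factors of its elements and choosing $P$ adaptively.
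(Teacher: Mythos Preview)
Your iterative lifting is a genuinely different route from the paper's, and the gap you yourself flag as the ``principal obstacle'' is fatal as stated: the loss accounting does not close. Take your own numbers. With $q$ bounded (the worst case) you need $k \asymp (\log T^\ast)/\log P \asymp (\log N)/\log P$ stages, and even granting a per-stage loss factor $L \ll \log P \cdot \log\log N$, one gets $\log(L^k)\asymp k\log L \gtrsim \tfrac{\log N}{\log P}\cdot \log\log P$. This quantity is minimised at $\log P \asymp \log N$, where it is still $\asymp \log\log N$, so $L^k \gtrsim \log N$ --- already larger than the target $(\log N)^\delta\log\log N$ for every $\delta<1$ (and the lemma is stated for $\delta\le 1/2$). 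More fundamentally, any iterative pigeonhole that fixes a single prime at each stage must cope with the fact that different $n\in A_{q_i}$ have their large prime factors in different dyadic windows; pigeonholing on the window first already costs a nontrivial constant factor per stage, and compounding over $k\asymp\log\log N$ stages gives $(\mathrm{const})^{\log\log N}=(\log N)^{\Theta(1)}$, which cannot be tuned down to $(\log N)^\delta$ for small $\delta$.

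The paper avoids iteration entirely by capturing all the large primes at once. Fix a threshold $y=\exp\bigl((\log N)^{1-\delta}/(10\log\log N)\bigr)$ and for each $n\in A_q$ set $d_n=\prod_{p\mid (n/q),\,p>y}p^{v_p(n)}$, the $y$-rough part of $n/q$. Since $n/(qd_n)$ is $y$-smooth and $\Omega(n)\le 5\log\log N$, one has $qd_n\ge n/y^{\Omega(n)}\ge M\exp(-(\log N)^{1-\delta})$, giving the second conclusion immediately. The first conclusion is obtained not by discarding small elements (your bound of $\log H$ on the discarded mass can exceed the surviving density $\asymp\eta(\log N)^{-\delta}$ when $\log(N/M)$ is bounded) but by first deleting the ``poor'' $n$ having fewer than two prime factors in $[H,y]$: a direct sieve estimate shows these contribute at most $\eta/2$, and every surviving $n$ then satisfies $n/(qd_n)\ge H$ automatically. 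Finally a \emph{single} pigeonhole over the value of $d_n$ gives the third conclusion, with loss factor exactly
\[
\sum_{d:\,p\mid d\Rightarrow p>y}\frac{1}{d}\ \le\ \prod_{y<p\le N}\Bigl(1-\tfrac1p\Bigr)^{-1}\ \ll\ \frac{\log N}{\log y}\ \asymp\ (\log N)^\delta\log\log N
\]
via Mertens. Grouping by the full rough part $d_n$ is what lets the loss be read off as one Euler product rather than accumulated multiplicatively across stages.
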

\begin{proof}
Let $y = \exp((\log N)^{1-\delta}/(10 \log \log N))$. For $n \in A_q$, define
\[d_n = \prod_{\substack{p | (n/q)\\ p > y}} p^{v_p(n)}.\] 
Note that for all $n \in A_q$
\[qd_n \ge n/y^{\Omega(n)} \ge M/\exp((\log N)^{1-\delta}).\] This implies the lower bound of the first item.

Consider the set of all integers with less than two prime factors in the range $[H,y]$; call such numbers ``poor'' and let $A_q'$ be the set of poor numbers in $A_q$. Note that $M/q\ge \exp((\log N)^{1-\delta})$ and therefore applying \cref{lem:fund-lem} we find that 
\begin{align*}
q \cdot R(A_q') &\le \sum_{\substack{n\in [M/q,N/q]\\n\text{ is poor}}}\frac{1}{n} \ll \log\bigg(\frac{N}{M}\bigg) \cdot \frac{\log H}{\log y}(\log\log N)\le \frac{\eta}{2}.
\end{align*}
Here, we have applied \cref{lem:fund-lem} in the following way: the number of integers in $[X, 2X]$ (for $X \ge M/q$) that are divisible by some $p \in [H, y]$ and no other primes in $[H, y]$ is at most $\ll \frac{X}{p} \frac{\log H}{\log y}$ (one can check that $M/(qp) \ge M/(qy)$ is sufficiently large in terms of $y$ to apply \cref{lem:fund-lem}). Now, $\sum_p \frac{X}{p} \frac{\log H}{\log y} \ll X \cdot \frac{\log H}{\log y}(\log \log N)$.

Let $D$ be the set of integers with all prime factors in $[y, N]$ and define $\wt{A_q} = A_q\setminus A_q'$. Note that $d_n \in D$ for all $n \in A_q$, and $q \cdot R(\wt{A}_q) \ge q \cdot R(A_q) - q \cdot R(A_q') \ge \eta/2$. Furthermore for $n\in \wt{A_q}$, we have that $qd_n\le n/H$ as we are able to remove at least $1$ primes factor in $[H,y]$. Furthermore note that
\begin{align*}
    \frac{\eta}{2} &\le q \cdot R(\wt{A_q}) = \sum_{n \in \wt{A_q}} \frac{q}{n} \le \sum_{d \in D} \frac{1}{d} \sum_{\substack{n\in \wt{A_q}\\q | n, d_n = d}} \frac{qd}{n} \le \sum_{\substack{d \in D\\ qd\ge M/\exp((\log N)^{1-\delta}}}\frac{1}{d} \cdot qd \cdot R(A_{qd}^\ast).
\end{align*}
where $A_{qd}^\ast := \{n \in \wt{A_q} : d_n = d \}$.
Take $d$ such that $qd \cdot R(A_{qd}^\ast)$ is maximal. The result follows from the bound
\[ \sum_{d \in D} \frac{1}{d} \le \prod_{p \in [y, N]} (1 + 1/p + 1/p^2 + \dots) \le \prod_{p \in [y, N]} \frac{p}{p-1} \ll \frac{\log N}{\log y} \ll (\log N)^{\delta} \log \log N.\]
\end{proof}

The following is the main proposition we will use to prove \cref{thm:main}. Its proof is broadly based on the ideas of \cite{Cro03}, \cite[Proposition 2]{Blo21}, and \cite[Proposition 3]{Blo21}. The main difference is that we use a sieve via \cref{lem:fund-lem} to estimate the density of exceptional primes, as opposed to something like \cite[Lemma 4]{Blo21}.
\begin{proposition}\label{prop:main}
There exists an absolute constant $C\ge 1$ such that the following holds. Fix $\delta \in (0,1), \eps\in (0,1/10)$ and suppose $N$ be sufficiently large in terms of $\eps, \delta$. Fix parameters $\eta, S, K, M, N$ such that $\eta \ge (\log N)^{-1}$ and
\[N^{.99} \le S \le K \le M \le N/10^4.\] 
Suppose we additionally have:
\begin{align*}
    \Gamma &:= \max\bigg(\frac{\eta}{(\log N)^{\delta}(\log\log N)^3}, \frac{\eta^2(\log N)^{1-2\delta}}{\log(N/M)^2 (\log\log N)^5}\bigg)\\
    S &\le \min\left(\frac{M^2}{CN}, \frac{\eta MK^2}{N^2(\log N)^3}\right), \\
    K &\le M \exp(-(\log N)^{1-\delta}), \\
    C &\le \frac{\Gamma^2}{(\log N)^{2\eps}(\log(N/M) + (\log N)^{1-\delta})}.
\end{align*}

Let $A\subseteq [M,N]$ be such that
\begin{itemize}
    \item Every $n\in A$ is $S$--smooth,
    \item $\max_{n\in A}\Omega(n)\le 5\log\log N$,
    \item $\min_{q\in \mc{Q}_A} qR(A_q)\ge \eta$.
\end{itemize}
Furthermore let $x\in \mb{N}$ be such that $R(A)\in [(1 + 1/(\log N))x/Q, (\log N)x/Q]$. Then there exists $B \subseteq A$ such that $R(B) = x/Q$.
\end{proposition}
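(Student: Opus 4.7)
The plan is to follow the blueprint already exhibited in the proof of \cref{prop:simple}: sample a random $B \subseteq A$ via independent Bernoullis $p_n$ chosen so that $\mathbb{E}[R(B)] = x/Q$, then apply Fourier inversion to write
\[ \mathbb{P}[R(B) = x/Q] = \frac{1}{Q} \sum_{-Q/2 < h \le Q/2} \Re\!\left(e\!\left(-\frac{hx}{Q}\right) \prod_{n \in A} \bigl(1 - p_n + p_n e(h/n)\bigr)\right), \]
and show this is bounded below by a positive multiple of $1/Q$ while a separate Azuma--Hoeffding argument (\cref{lem:azuma}) controls $\mathbb{P}[|R(B) - x/Q| \ge 1]$. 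Choosing $p_n$ is now possible because the hypothesis $R(A) \in [(1 + 1/\log N) x/Q, (\log N) x/Q]$ lets us scale by a common factor so that $p_n \in [c/(\log N)^2, 1/2]$; the variance proxy is $O(\sum_n 1/n^2) = O(1/M)$, and our conditions on $S, M$ give $2\exp(-M^2/(2N)) \ll e^{-6S} \ll 1/Q^2$.

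\textbf{Major arcs and the minor arc upper bound.} For $|h| \le M/2$ the major-arc lemma \cref{lem:major-arc} immediately yields the contribution $\ge 3/(4Q)$ (the lower bound $|A| \ge N^{.95}$ needed to invoke it follows from \cref{lem:A}-style reasoning using $\min_q qR(A_q) \ge \eta$ and \cref{lem:smooth,lem:divisor}). For $|h| > M/2$, let $h_n$ denote the representative in $(-n/2, n/2]$ and, following the proof of \cref{prop:simple}, for a threshold $t$ (to be chosen of order $\eta N^2 (\log N)^{\kappa} / K^2$ for a suitable $\kappa$) define
\[ \mathcal{D}_h := \bigl\{ q \in \mathcal{Q}_A : |\{n \in A_q : |h_n| \ge K/2\}| < t \bigr\}. \]
Using $\Omega(n) \le 5\log\log N$ and the second item of \cref{fact:Taylor}, one obtains the estimate $\prod_{n \in A} |1 - p_n + p_n e(h/n)| \le N^{-c|\mathcal{Q}_A \setminus \mathcal{D}_h|}$ for some $c$ depending on the parameters through the quantity $\Gamma$ defined in the hypotheses.

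\textbf{The inverse problem (main obstacle).} The crucial step -- and the place where \cref{prop:main} genuinely departs from \cref{prop:simple} -- is showing that for every $h$ in the minor arc range, there is a \emph{single} integer $x_h \in (h - K/2, h + K/2]$ such that $[\mathcal{D}_h] \mid x_h$. The issue is that $A$ no longer contains all smooth integers, so we cannot directly find smooth multiples of $qp'r$ in $A$. Here we invoke \cref{lem:lift}: for each $q \in \mathcal{D}_h$ we apply it with parameter $\eta$ playing the role of the density guarantee, and obtain an integer $d_q$ with $qd_q \ge M \exp(-(\log N)^{1-\delta})$ and a subset $A^\ast_{qd_q} \subseteq A_{qd_q}$ of reciprocal-sum $\ge C^{-1} \eta / ((\log N)^{\delta} \log\log N)$ on which almost all $|h_n|$ are small. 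Because $qd_q \ge K$, there is at most one multiple of $qd_q$ in $(h-K/2, h+K/2]$, which we call $x_q$. Now by averaging (using $\Omega(n) \le 5\log\log N$) we find many small primes $p'$ such that $A_{qd_q p'}$ still has most $h_n$ small, then multiply by one or two further primes $r$ to push $q d_q p' r$ into a scale comparable to $K$, and finally find many primes $p \in [c_1 \log N, c_2 \log N]$ with $A_{q d_q p' r p} \subseteq \{|h_n| < K/2\}$ nonempty. Each such $p$ forces $p \mid x_q$, and since two different $q_1, q_2 \in \mathcal{D}_h$ share $\ge (9/10)|\mathcal{P}|$ of these primes, the product of common primes exceeds $N$ and hence $x_{q_1} = x_{q_2}$ by CRT. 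This identifies the common $x_h$.

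\textbf{Summation over minor arcs.} Having established $[\mathcal{D}_h] \mid x_h$, the number of $h$ with $\mathcal{D}_h = \mathcal{D}$ is at most $(K+1) \cdot Q / [\mathcal{D}] \le N \cdot \prod_{q \in \mathcal{Q}_A \setminus \mathcal{D}} q \le N^{|\mathcal{Q}_A \setminus \mathcal{D}| + 1}$, exactly as in \cref{prop:simple}. Combining with the minor-arc pointwise bound and summing geometrically over nonempty subsets $\mathcal{Q}_A \setminus \mathcal{D}$ (nonempty since $|h| > M/2 \ge K/2$ prevents $\mathcal{D}_h = \mathcal{Q}_A$) yields a total minor-arc contribution $\ll 1/(QN)$. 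Together with the major-arc bound this gives $\mathbb{P}[R(B) - x/Q \in \mathbb{Z}] \ge 1/(2Q) + O(1/(QN))$, and subtracting the negligible Azuma--Hoeffding tail concludes $\mathbb{P}[R(B) = x/Q] > 0$. The main subtlety throughout will be verifying that the parameter inequalities in the hypothesis -- particularly the threshold $\Gamma$ and the condition on $S$ in terms of $\eta M K^2/(N^2 (\log N)^3)$ -- are exactly what is needed to make both the minor-arc pointwise bound and the \cref{lem:lift} application succeed simultaneously.
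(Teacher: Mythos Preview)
Your overall framework (random sampling, Fourier inversion, major/minor arc split, Azuma tail bound, and invoking \cref{lem:lift} on each $q\in\mc{D}_h$) matches the paper. The gap is in the inverse--problem step. After \cref{lem:lift} you already have $qd_q \ge M\exp(-(\log N)^{1-\delta}) \ge K$, so there is no need (and no room) to ``multiply by one or two further primes $r$ to push $qd_qp'r$ into a scale comparable to $K$''; that manoeuvre belonged to \cref{prop:simple}, where $q$ could be far below $K$. More seriously, your plan to then ``find many primes $p\in[c_1\log N,c_2\log N]$ with $A_{qd_qp'rp}$ nonempty'' relies on the fact, specific to \cref{prop:simple}, that $A$ consists of \emph{all} suitably smooth integers in $[M,N]$. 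In \cref{prop:main} the set $A$ is arbitrary subject only to the density hypothesis $\min_q qR(A_q)\ge\eta$, and there is no reason multiples of $qd_qp'rp$ lie in $A$ for a positive fraction of primes $p$ of size $\asymp\log N$. The claim that two different $q_1,q_2$ share $\ge(9/10)|\mc{P}|$ of these primes is therefore unsupported.

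The paper handles this step quite differently. After lifting, it rescales to $\wt{T}_q=\{n/(qd_q):n\in T_q^\ast\}$, locates a dyadic window $[y_q,2y_q)$ on which $R(\wt{T}_q\cap[y_q,2y_q))\gg \Gamma/\log y_q$ (this is where the quantity $\Gamma$ in the hypothesis actually enters --- not, as you wrote, in the minor--arc pointwise bound, which is just $N^{-10|\mc{Q}_A\setminus\mc{D}_h|}$), and then applies the Fundamental Lemma of Sieve Theory (\cref{lem:fund-lem}) to bound the reciprocal sum of primes in $\mc{P}=[\exp((\log N)^\eps),\exp((\log y)(\log N)^{-\eps})]$ that divide \emph{no} element of $\wt{T}_q\cap[y_q,2y_q)$. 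For $q_1,q_2\in\mc{D}_h$ this yields $R(\mc{P}\setminus(\mc{P}_{q_1}\cup\mc{P}_{q_2}))\ge 1$, whence enough common primes $p$ satisfy $p\mid x_{q_1}$ and $p\mid x_{q_2}$ to force $x_{q_1}=x_{q_2}$. The hypothesis on $\Gamma$ is precisely the inequality needed for that sieve computation to close; your sketch never uses it.
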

Note that in \cref{prop:main}, $\Gamma$ is the $\max$ of two terms. The former is used for \cref{thm:main}, and the latter is for \cref{thm:300,prop:310}.
\begin{proof}
As noted in the proof of \cref{prop:simple}, for integers $a, b \neq 0$:
\[ 1_{a/b \in \mb{Z}} = \frac{1}{b} \sum_{-b/2 < h \le b/2} e\left(\frac{ha}{b} \right). \]
Let $\tau = (x/Q)/R(A)$. Note that $\frac{1}{\log N} \le \tau \le \frac{\log N}{1+\log N}$. Consider generating a subset $B \subseteq A$ where each element $n \in A$ is included in $B$ with probability $\tau$. The probability that $R(B) - x/Q \in \mathbb{Z}$ is:
\begin{align}
\sum_{B \subseteq A} &\tau^{|B|}(1-\tau)^{|A \setminus B|} \frac{1}{Q} \sum_{-Q/2 < h \le Q/2} e\left(\sum_{n \in B} \frac{h}{n} - \frac{hx}{Q} \right)  \\
&=  \frac{1}{Q} \sum_{-Q/2 < h \le Q/2} e\left(-\frac{hx}{Q}\right) \prod_{n \in A} (1-\tau+\tau e(h/n)) \nonumber \\
&= \frac{1}{Q} \sum_{-Q/2 < h \le Q/2} \Re\left(e\left(-\frac{hx}{Q}\right) \prod_{n \in A} (1-\tau+\tau e(h/n))\right). \label{eq:expression}
\end{align}
The final line holds because the expression is a probability (also, by symmetry of $h$).

Our ultimate goal is to establish that \eqref{eq:expression} is at least $\frac{1}{2Q}$. Before proving this, let us see why the proposition then follows. It suffices to prove that $\mb{P}[|R(B) - x/Q| \ge 1] < \frac{1}{4Q}$. Because $\mb{E}[R(B)] = \tau R(A) = x/Q$, the previous probability is at most $2\exp(-M^2/(2N))$ by Azuma-Hoeffding (\cref{lem:azuma}). By \cref{thm:Merten}, we have $Q \le \prod_{q \le S} \ll e^{5S}$. For our choice of $S, M$, it holds that $2\exp(-M^2/(2N)) \le e^{-6S}$ for $N\gg 1$.

As $R(A)\ge \eta$, we trivially see that $|A|\ge N^{.95}$. Applying \cref{lem:major-arc}, where $p_i = \tau$ for all $i\in A$, we find that 
\[ \frac{1}{Q} \sum_{|h| \le M/2} \Re\left(e\left(-\frac{hx}{Q}\right) \prod_{n \in A} (1-\tau+\tau e(h/n))\right)\ge \frac{3}{4Q}.\]

The remainder of the proof is devoted to estimating the expression in \eqref{eq:expression} when $|h|>M/2$, e.g. the minor arcs. 

\textbf{Minor arcs, upper bound:} For each $n$, let $h_n$ be the unique integer in $(-n/2, n/2]$ with $h \equiv h_n \imod{n}$. Let 
\[t = \frac{50N^2\tau^{-1}\log N \log \log N}{K^2}.\] Finally, let $I_h = (h - K/2, h + K/2)$ be an interval and define
\[ \mc{D}_h = \{ q \in \mc{Q}_A : |\{ n \in A_q : h_n \ge K/2 \}| < t\}.\] 

We now establish a relationship between the Fourier coefficient at $h$ and the size of $|\mc{Q}_A \setminus \mc{D}_h|$. Recall that each $n\in A$ satisfies $\Omega(n)\le 5 \log\log N$. Therefore using the second item of \cref{fact:Taylor}, we have 
\begin{align*}
\left(\prod_{n \in A} |(1-\tau+\tau e(h/n))| \right)^{5 \log \log N} &\le \prod_{q \in \mc{Q}_A} \prod_{n \in A_q} |(1-\tau+\tau e(h/n))|\le \prod_{q \in \mc{Q}_A \setminus \mc{D}_h} \exp\bigg(-\tau \cdot \frac{K^2}{N^2} \cdot t\bigg)\\
& \le \exp(-50|\mc{Q}_A \setminus \mc{D}_h| \log N \log \log N)
\end{align*}
and thus 
\begin{equation}\label{eq:minor-bound}
\prod_{n \in A} |(1-\tau+\tau e(h/n))| \le N^{-10|\mc{Q}_A \setminus \mc{D}_h|}  
\end{equation}

\textbf{Minor arcs, establishing divisibility:}
The crucial step in our proof is establishing there exists an element $x\in I_h$ such that $[\mc{D}_h]| x$.

For $q \in \mc{D}_h$, define 
\[T_q = |\{ n \in A_q : |h_n| < K/2 \}|.\]
By definition of $\mc{D}_h$, we have $|A_q \setminus T_q| < t$. Note that 
\[R(T_q) \ge R(A_q) - \frac{t}{M} \ge \frac{\eta}{q} - \frac{t}{M}\ge \frac{\eta}{2q}.\]
Here we use that $\eta/(2q)\ge \eta/(2S) \ge t/M$ as $S\le \eta MK^2/(N^2 (\log N)^3)$.

We now apply \cref{lem:lift} with $\delta$ for each $q\in \mc{D}_h$ separately with the set $A = T_q$. Let $d_q$ be the multiple of $q$ output by \cref{lem:lift} and $T_q^{\ast}\subseteq (T_q)_{qd_q}$ be the set $A_{qd_q}^\ast$. The application is valid because $q \le S \le K \le M\exp(-(\log N)^{1-\delta})$, by the hypotheses. Note that $qd_q \ge M \exp(-(\log N)^{1-\delta})\ge K$; the final inequality is by assumption. Therefore there is at most one multiple of $qd_q$ in $I_h$ -- call it $x_q$. Next let 
\[\wt{T}_q = \{n/(qd_q) : n \in T_q^{\ast},~qd_q | n\}.\] By the second item of \cref{lem:lift}, we know that
\[ R(\wt{T}_q) \ge qd_q \sum_{\substack{n \in T_q^{\ast} \\ qd_q | n}} \frac{1}{n} \ge \frac{\eta}{C(\log N)^{\delta} \log \log N}.\]

Let $H = \exp(\eta (\log N)^{1-\delta}(\log \log N)^{-2} (\log(N/M))^{-1})$. Note that 
$\max(1,H)\le \min(\wt{T}_q)$ and $\max(\wt{T}_q)/ \min(\wt{T}_q)\le N/M$. Therefore 
\begin{align*}
\sum_{\min(\wt{T}_q)\le 2^{j}\le \max(\wt{T}_q)} &\frac{1}{j + 1} \ll \log\bigg(\frac{\log(\max(\wt{T}_q))}{\log(\min(\wt{T}_q))}\bigg)\ll \log\bigg(1 + \frac{\log(\max(\wt{T}_q)/\min(\wt{T}_q))}{\log(\min(\wt{T}_q))}\bigg) \\
&\ll \min\bigg(\log\log N, \frac{\log(N/M)}{\log(H)}\bigg) = \min\bigg(\log\log N, \frac{\log(N/M)^2(\log\log N)}{\eta(\log N)^{1-\delta}}\bigg) := L. 
\end{align*}

Note that 
\begin{align*}
\frac{\eta}{C(\log N)^{\delta} \log \log N} &\le R(\wt{T}_q) = \sum_{\min(\wt{T}_q)\le 2^{j}\le \max(\wt{T}_q)} R(\wt{T}_q \cap [2^{j}, 2^{j+1})) \\
&\ll \max_{\min(\wt{T}_q)\le 2^{j}\le \max(\wt{T}_q)} \left((j + 1) \cdot R(\wt{T}_q \cap [2^{j}, 2^{j+1}))\right) \cdot \sum_{\min(\wt{T}_q)\le 2^{j}\le \max(\wt{T}_q)} \frac{1}{j + 1} \\
&\ll \max_{\min(\wt{T}_q)\le 2^{j}\le \max(\wt{T}_q)} \left((j+1) \cdot R(\wt{T}_q \cap [2^{j}, 2^{j+1}))\right) \cdot L.
\end{align*}

Thus there is $j_q \ge 0$ with $2^{j_q} \in [\min(\wt{T}_q),\max(\wt{T}_q)]$ such that for $y_q := 2^{j_q}$, \[ R(\wt{T}_q \cap [y_q, 2y_q)) \gg \frac{\eta}{L (\log N)^{\delta} (\log \log N)^2}\frac{1}{\log y_q} \ge \frac{\Gamma}{\log y_q}. \]
Let $q_1, q_2 \in \mc{D}_h$. Define $y = \min\{y_{q_1}, y_{q_2}\}$ and let
\[ \mc{P} := \{\exp((\log N)^{\eps}) \le p \le \exp((\log y)(\log N)^{-\eps})\}. \]
Note that $\mc{P}$ depends on $q_1, q_2$. We should note that $\Gamma/\log y \ll 1$ from the above, so $\log y \ge \Gamma/C \gg (\log N)^{3\eps}$, so the set $\mc{P}$ is nonempty.
Let $\mc{P}_q = \{p \in \mc{P} : p \nmid n \forall n \in \wt{T}_q \cap [y_q, 2y_q]\}$ for $q \in \{q_1, q_2\}$. By \cref{lem:fund-lem} we know that
\[ R(\mc{P}_q) \le -\log\left(\frac{\Gamma}{C\log y_q}\right), \] for $q \in \{q_1, q_2\}$. Thus, we conclude that
\begin{align*}
    R(\mc{P} \setminus (\mc{P}_{q_1} \cup \mc{P}_{q_2})) &\ge R(\mc{P}) - R(\mc{P}_{q_1}) - R(\mc{P}_{q_2}) \\
    &\ge \log\left(\frac{\log y}{2(\log N)^{2\eps}} \right) + \log\left(\frac{\Gamma}{C\log y_{q_1}} \right) + \log\left(\frac{\Gamma}{C\log y_{q_2}} \right) \\
    &= \log\left(\frac{\Gamma^2}{2C^2(\log N)^{2\eps} \log \max(y_{q_1}, y_{q_2})} \right) \\
    &\ge \log\left(\frac{\Gamma^2}{2C^2(\log N)^{2\eps}(\log(N/M) + (\log N)^{1-\delta})} \right) \ge 1,
\end{align*}
where the final line uses that $y_q \le N/(qd_q)$ and $qd_q \ge M \exp(-(\log N)^{1-\delta})$ by \cref{lem:lift} for $q \in \{q_1, q_2\}$, and the assumption in the proposition statement.

Let $q \in \{q_1, q_2\}$. By definition, for each $p \in \mc{P} \setminus \mc{P}_q$ there exists $n/(qd_q) \in \wt{T}_q$ such that $p | (n/(qd_q))$. Hence $pqd_q | n$ and $|h_n| < K/2$. Because $|h_n| < K/2$, there is an $x \in I_h$ such that $n | x$, so $pqd_q | x$. Because $x_q$ is the unique multiple of $qd_q$ in $I_h = (h - K/2, h + K/2)$, we know that $x = x_q$, and thus $p | x_q$.
Therefore $p \in \mc{P} \setminus (\mc{P}_{q_1} \cup \mc{P}_{q_2})$ divides both $x_{q_1}$ and $x_{q_2}$, and hence $p | (x_{q_1} - x_{q_2})$. Because there are at least $\exp((\log N)^{\eps})$ such $p$ by the above, we conclude that $|x_{q_1} - x_{q_2}|$ is a multiple of some integer larger than $\exp((\log N)^{\eps})^{\exp((\log N)^{\eps})} > N$. Because $x_{q_1}, x_{q_2} \in I_h$, we know that $|x_{q_1} - x_{q_2}| \le K \le N$, so $x_{q_1} = x_{q_2}$ for all $q_1, q_2 \in \mc{D}_h$. Thus, there is a single $x \in I_h$ with $[\mc{D}_h] | x$.

\textbf{Minor arcs, finishing the proof:} 
We are now in position to complete the proof. Consider all $h\in (-Q/2,Q/2]$ such that $\mc{D}_h = \mc{D}$. As $[\mc{D}_h] | x$ for an element $x\in I_h$, we have that the number of such $h$ is bounded by 
\begin{equation}\label{eq:fin-bound}
(K + 1) \cdot \frac{[\mc{Q}_A]}{[\mc{D}]}\le N \cdot \prod_{q \in \mc{Q}_A \setminus \mc{D}} q \le N^{|\mc{Q}_A \setminus \mc{D}|+1}. 
\end{equation}
As we have restricted attention to $|h|>M/2\ge K/2$, we have that $\mc{D}_h \neq \mc{Q}_A$. Therefore the total contribution over minor arcs, using \cref{eq:minor-bound,eq:fin-bound} and is bounded by 
\[\frac{1}{Q}\sum_{\mc{D} \subsetneq \mc{Q}_A}N^{|\mc{Q}_A \setminus \mc{D}|+1} \cdot N^{-10|\mc{Q}_A \setminus \mc{D}|} \le \frac{1}{Q}\cdot \sum_{s\ge 1} N^{s+1} \cdot N^{s} \cdot N^{-10s} \le 2/(QN).\]
This completes the proof. 
\end{proof}

\section{Proof of \cref{thm:main}}
We are now in position to prove our main result using \cref{prop:main}. The first step is to find a range $[M, N]$ with good density, and $M = N/\exp((\log N)^{1-\alpha})$, on which we will ultimately apply \cref{prop:main}.
\begin{lemma}
\label{lem:m}
For $A \in [1, N]$ with $R(A) \ge \eta$  and $\alpha \in (0,3/4)$, there are $M', N'$ such that $M' \ge N'\exp((\log N')^{1-\alpha})$, and
\[ R(A \cap [M', N']) \gg \frac{\eta}{(\log N)^{\alpha} \log \log N}. \]
\end{lemma}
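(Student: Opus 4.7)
The plan is a straightforward pigeonhole argument over a logarithmic partition of $[1, N]$. Iteratively define $N_0 := N$ and
\[N_{i+1} := \lfloor N_i \exp(-(\log N_i)^{1-\alpha})\rfloor\]
until $N_i$ drops below a large absolute constant $C_0$, and set $I_i := (N_{i+1}, N_i]$. By construction each interval satisfies $N_i/N_{i+1} \ge \exp((\log N_i)^{1-\alpha})$, so taking $M' := N_{i+1}+1$ and $N' := N_i$ automatically meets the stipulated multiplicative-width condition (read, consistently with the proof outline $M = N/\exp((\log N)^{1-\alpha})$ from the introduction, as $N'/M' \asymp \exp((\log N')^{1-\alpha})$).

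The key estimate is that the number of intervals satisfies $T \ll (\log N)^\alpha \log\log N$. Setting $L_i := \log N_i$, the recursion becomes $L_{i+1} = L_i - L_i^{1-\alpha} + O(1)$. Applying the mean value theorem to $x \mapsto x^\alpha$ (using $\xi \in (L_{i+1}, L_i)$ with $\xi \le L_i$ and $\alpha - 1 < 0$) yields
\[L_i^\alpha - L_{i+1}^\alpha \ge \alpha L_i^{\alpha-1}(L_i - L_{i+1}) \ge \alpha \cdot L_i^{\alpha-1}\cdot L_i^{1-\alpha} = \alpha,\]
so telescoping gives $T \le L_0^\alpha/\alpha = (\log N)^\alpha/\alpha$, which is $\ll (\log N)^\alpha \log\log N$ whenever $\alpha \ge 1/\log\log N$. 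For $\alpha < 1/\log\log N$ the linearization is weak, but here one works directly from the closed form $L_{i+1}/L_i = 1 - L_i^{-\alpha}$ and observes that $L_i^{-\alpha} \le L_0^{-\alpha} = (\log N)^{-\alpha}$ is bounded strictly below $1$ (in fact at most $1/e$ in this regime), so each step contracts $L_i$ by at least a constant factor and $T \ll \log\log N$ trivially.

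Pigeonhole concludes the argument. The uncovered tail contributes $R(A \cap [1, C_0]) = O(1)$, which is negligible whenever $\eta$ exceeds a large absolute constant; if instead $\eta = O(1)$, then the target bound $\eta/((\log N)^\alpha \log\log N) = o(1)$ is achieved by any interval containing even a single element of $A$. In the nontrivial regime $\sum_i R(A \cap I_i) \ge R(A)/2 \ge \eta/2$, and selecting the maximizing $i^\ast$ gives
\[R(A \cap I_{i^\ast}) \gg \frac{\eta}{T} \gg \frac{\eta}{(\log N)^\alpha \log\log N},\]
as required. No substantive obstacle arises; the $\log\log N$ slack in the stated bound comfortably absorbs both the $1/\alpha$ factor from the mean value theorem count and the boundary contribution from $[1, C_0]$.
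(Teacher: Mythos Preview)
Your proof is correct and follows essentially the same approach as the paper's: define the recursive sequence $N_{i+1} \approx N_i \exp(-(\log N_i)^{1-\alpha})$, bound the number of steps by $\ll (\log N)^\alpha \log\log N$, and apply pigeonhole. One small slip in the $\alpha < 1/\log\log N$ branch: the inequality should read $L_i^{-\alpha} \ge L_0^{-\alpha} = (\log N)^{-\alpha} \ge 1/e$ (not $\le$ and not ``at most $1/e$''), which is precisely what yields the geometric contraction $L_{i+1}/L_i \le 1 - 1/e$ and hence $T \ll \log\log N$.
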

\begin{proof}
Define a sequence $N_1 = N$ and $\log N_{i+1} = \max(\log N_i - (\log N_i)^{1-\alpha}, 1)$. Note that $N_i = 1$ for $i \ge C(\log N)^{\alpha}\log\log N$, for sufficiently large constant $C$. Thus, there is an $i$ with
\[ R(A \cap [N_{i+1}, N_i]) \ge \frac{\eta}{C(\log N)^{\alpha}\log\log N}. \]
Thus, we can take $M' = N_{i+1}$ and $N' = N_i$.
\end{proof}
The next lemma prunes $A$ to ensure that the multiples of $q$ in $A$ have good density. This idea is essentially \cite[Lemma 6]{Blo21}.
\begin{lemma}
\label{lem:prune}
Assume $N$ is sufficiently large, and $\xi \in (0, 1)$. For $A \in [1, N]$ with $R(A) \ge \eta$, there is a subset $A' \subseteq A$ such that $R(A') \ge (1-\xi)\eta$, and for all $q \in \mc{Q}_{A'}$, that $q \cdot R(A'_q) \ge \frac{\eta\xi}{2 \log \log n}$.
\end{lemma}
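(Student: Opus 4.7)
The plan is to establish the lemma by a standard greedy (or iterative) pruning argument. Start with $A' = A$ and, so long as there exists some prime power $q \in \mc{Q}_{A'}$ for which the ``local'' density condition $q \cdot R(A'_q) \ge \eta\xi/(2\log\log N)$ fails, delete all multiples of $q$ from $A'$, i.e. replace $A'$ by $A' \setminus A'_q$. When the process terminates, the defining condition of the lemma is satisfied by construction.

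The content of the proof is to control how much reciprocal mass is lost during this pruning. First I would observe that once we remove $A'_q$ for a given prime power $q$, no element of the updated $A'$ is divisible by $q$, so $q$ is removed from $\mc{Q}_{A'}$ and can never appear in a future step. Hence each $q \in \mc{Q}_A$ is used at most once as a pivot. Second, if $q$ is used as a pivot at some step, the mass removed is exactly $R(A'_q) < \eta\xi/(2q\log\log N)$ by the failure of the density condition at that step. Summing over all pivots $q$ used, the total mass removed is at most
\[
\sum_{q \in \mc{Q}_A} \frac{\eta\xi}{2q\log\log N}
\le \frac{\eta\xi}{2\log\log N}\sum_{q \le N} \frac{1}{q}.
\]
Third, I would invoke \cref{thm:Merten} together with the elementary bound $\sum_q 1/q = \sum_p 1/p + O(1)$ (summing the geometric series over higher prime powers) to conclude $\sum_{q \le N} 1/q \le \log\log N + O(1)$. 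Consequently the total mass removed is at most $\eta\xi/2 \cdot (1 + O(1/\log\log N)) \le \eta\xi$ for $N$ sufficiently large, giving $R(A') \ge (1-\xi)\eta$.

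The termination of the procedure is automatic since $A$ is finite, and the final $A'$ satisfies both required properties by design. There is no genuine obstacle here: the only care needed is the bookkeeping that each prime power can be a pivot only once (which uses the fact that deleting $A'_q$ purges $q$ from $\mc{Q}_{A'}$) and the application of Mertens' theorem in the form covering prime powers rather than primes. This mirrors the pruning step in \cite[Lemma~6]{Blo21} and is purely elementary.
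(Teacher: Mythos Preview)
Your proposal is correct and follows essentially the same approach as the paper's proof: iteratively delete multiples of any prime power $q$ failing the density condition, and bound the total reciprocal mass removed by $\frac{\eta\xi}{2\log\log N}\sum_{q\le N}\frac{1}{q} < \eta\xi$ via Mertens. Your write-up is in fact more careful than the paper's (you explicitly justify that each $q$ is a pivot at most once and that the process terminates), but the argument is the same.
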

\begin{proof}
Perform the following process: while there is $q \in \mc{Q}_A$ with $q \cdot R(A_q) < \frac{\eta\xi}{2 \log \log n}$, remove all multiples of $q$ from $A$. In total, this decreases $R(A)$ by at most
\[ \sum_{q \le N} \frac{1}{q} \frac{\eta\xi}{2 \log \log N} < \xi\eta. \qedhere\]
\end{proof}
We now prove our main proposition. 
\begin{proof}[Proof of \cref{thm:main}]
Initially, $A \subseteq [1, N]$ with $R(A) \ge (\log N)^{4/5+2\eps_0}$. Let $M', N'$ be as given by \cref{lem:m} for $\alpha = 1/5$. Relabel $A$ to $A \cap [M', N']$, $M'$ to $M$, and $N'$ to $N$. Note that now, $R(A) \ge (\log N)^{3/5+\eps_0}$, and $M = N\exp(-(\log N)^{4/5})$.

Set $\delta = 1/5$, and $K = M \exp(-(\log N)^{1-\delta}) = N \exp(-2(\log N)^{4/5})$. Finally, choose $S = N\exp(-6(\log N)^{4/5})$. Let $\mc{S}$ be the set of integers with a divisor larger than $S$. Note that
\begin{align*}
R([M, N] \cap \mc{S}) &\le \sum_{\log M \le k \le \log N} e^{-k}|[e^k, e^{k+1}] \cap \mc{S}| \\
&\ll \sum_{\log M \le k \le \log N} \frac{\log(e^k/S)}{k} \ll \frac{(\log N)^{4/5} \log(N/M)}{\log N} \\
&\ll (\log N)^{3/5},
\end{align*}
where the second line uses \cref{lem:smooth}.
Thus, $R(A \setminus \mc{S}) \ge (\log N)^{3/5+\eps_0}/2$. Replace $A$ with $A \setminus \mc{S}$ henceforth.

Let $T = \{n \ge 1 : \Omega(n) > 5 \log \log N\}$. By \cref{lem:divisor}, we know that 
\[ R(T \cap [M, N]) \ll \sum_{\log M \le k \le \log N} \frac{1}{(\log M)^{3}} \ll (\log N)^{-2}. \]
Thus, $R(A \setminus T) \ge (\log N)^{3/5+\eps_0}/4$. We replace $A$ with $A \setminus T$ henceforth. Through these reductions, we have ensured that all $n \in A$ have $\Omega(n) \le 5 \log \log N$, and $n$ is $S$--smooth.

Finally, apply \cref{lem:prune} to $A$ with $\xi = 1/2$, and relabel the resulting set $A'$ as $A$ again. We have that $R(A) \ge (\log N)^{3/5+\eps_0}/8$, and $q \cdot R(A_q) \ge \frac{(\log N)^{3/5+\eps_0}}{4 \log \log N} \ge (\log N)^{3/5+3\eps_0/4}$, for all $q \in \mc{Q}_A$. Let $\eta = (\log N)^{3/5+\eps_0/2}$.

We are in a position to apply \cref{prop:main}. Set all parameters as are done above, and $\eps = \eps_0/5$.
Recall that $\delta = 1/5$. We now verify the conditions. The lower bound on $\Gamma$ and upper bounds on $S, K$ are evident. For the final condition, we calculate that:
\begin{align*}
    &\frac{\Gamma^2}{(\log N)^{2\eps}(\log(N/M) + (\log N)^{1-\delta})} \ge \frac{\eta^2}{(\log N)^{2\eps}(\log N)^{2\delta}((\log N)^{4/5} + (\log N)^{1-\delta})(\log \log N)^6}.
\end{align*}
Because $\eta = (\log N)^{3/5+\eps_0/2}$, $\eps = \eps_0/5$, and $\delta = 1/5$, this expression is at least 
\[ (\log N)^{\eps_0/2}(\log \log N)^{-6} \ge (\log N)^{\eps_0/4}. \]
This verifies the fourth condition.

The conditions on elements $n \in A$ all hold by construction. Finally, we can choose $x = Q$, and note that $\log N \ge R(A) \ge 2$, so $R(A) \in [2x/Q, (\log N)x/Q]$. This completes the proof.
\end{proof}

\section{Proof of \cref{thm:300,prop:310}}
We now give the proof of \cref{thm:300}.
\begin{proof}[Proof of \cref{thm:300}]
Let $|A| \ge (1-1/e+\xi)N$, where $\xi > 0$ and $N$ sufficiently large in terms of $\xi$. Let $A' = A \cap [\xi N/2, N]$. Note that
\[ R(A') \ge \sum_{j=(1/e-\xi/2)N}^N \frac{1}{j} \ge 1+\frac{\xi}{10}. \]
Relabel $A'$ as $A$.

Set $\delta = 1/5$, $M = \xi N/2$, $K = M \exp((\log N)^{1-\delta})$, and $S = N \exp(-6(\log N)^{1-\delta})$. Remove from $A$ all numbers that are not $S$-smooth. As in the proof of \cref{thm:main}, by \cref{lem:smooth} this decreases $R(A)$ by $\ll (\log N)^{-1/5}\log(N/M) \le \xi/100$. Similarly, removing all $n$ with $\Omega(n) > 5 \log \log N$ reduces $R(A)$ by $\ll (\log N)^{-2} \le \xi/100$, by \cref{lem:divisor}. After this, we are left with a set $A'$ with $R(A') \ge 1+\xi/20$. Relabel $A'$ as $A$ again.

Now, apply \cref{lem:prune} to get a set $A'$ with $R(A') \ge 1+\xi/40$, and $q R(A'_q) \ge \frac{\xi}{100 \log \log N} := \eta$. Relabel this $A'$ as $A$ one final time.

We are now in a position to apply \cref{prop:main}. Set $\eps = 1/100$, $x = Q$, and note that $\Gamma \gg_{\xi} (\log N)^{3/5}(\log \log N)^{-7}$.
To verify the final condition, calculate that
\[ \frac{\Gamma^2}{(\log N)^{2\eps}(\log(N/M) + (\log N)^{1-\delta})} \gg_{\xi} (\log N)^{1/5}, \] as desired. Thus there is a subset $B \subseteq A$ with $R(B) = 1$.
\end{proof}

We now conclude with the proof of \cref{prop:310}.
\begin{proof}[{Proof of \cref{prop:310}}]
Let $\alpha = (\log N)^{-1/7+\eps_0}$. Because $\alpha \ge (\log N)^{-1}$, there exists a dyadic interval $[N', 2N']$ with $N'\ge N (\log N)^{-2}$ such that 
\[|A\cap [N',2N']|\ge \alpha \cdot N'/2.\]
Replace $N$ with $2N'$ and $A$ with $A \cap [N', 2N']$. Note that $R(A) \ge \alpha/4$.

Set $\delta = 1/7$, $M = N/2$, $K = M \exp(-(\log N)^{1-\delta})$ and $S = N \exp(-6(\log N)^{1-\delta})$. Removing all non-$S$-smooth numbers from $A$ reduces $R(A)$ by $\ll (\log N)^{-\delta}\log(N/M) \le \alpha/20$, by \cref{lem:smooth}. Removing all $n \in A$ with $\Omega(n) > 5 \log \log N$ decreases $R(A)$ by $\ll (\log N)^{-2}$, by \cref{lem:divisor}. After this, our set $A$ still has $R(A) \ge \alpha/8$.
Finally, prune $A$ using \cref{lem:prune} for $\xi=1/2$ so that for all $q \in \mc{Q}_A$, that $q \cdot R(A_q) \ge \frac{\alpha}{32 \log \log N} := \eta$. After this, we have $R(A) \ge \alpha/16$.

Now we apply \cref{prop:main}, with $x$ chosen later, so that $\alpha/128 \le x/Q \le \alpha/32$.
Note that for $\eps = 1/1000$,
\[ \Gamma := \frac{\eta^2(\log N)^{1-\delta}}{\log(N/M)^2(\log \log N)^5} \gg \alpha^2(\log N)^{5/7}(\log \log N)^{-7}. \]
Thus, we can calculate that
\[ \frac{\Gamma^2}{(\log N)^{2\eps}(\log(N/M) + (\log N)^{1-\delta})} \gg \alpha^4(\log N)^{4/7}(\log \log N)^{-14} \ge (\log N)^{\eps_0}, \] for $\alpha = (\log N)^{-1/7+\eps_0}$.

Thus, there is a subset $B \subseteq A$ with $R(B) = x/Q$. By \cref{lem:fund-lem}, there is a prime $p$ in $10/\alpha^2\le p \le \exp(C/\alpha)$, such that there is $n \in A$ with $p | n$. Here we are using $|A| \ge \alpha N/16$, $A \subseteq [N/2, N]$, and thus there exists $p \in \mc{Q}_A$ of the specified size. We now finally select $x$ so that $x/Q = \frac{\lfloor R(A) \cdot p/5\rfloor }{p}$, as desired.
\end{proof}

We end by briefly explaining why $t\ge \exp(1/(C\alpha))$ is in fact necessary. Let $L = \exp(1/(C\alpha))$. By \cref{thm:Merten}, we have that 
\[\sum_{p\le L}\frac{1}{p} = \log\log L + O(1).\]
By applying \cref{lem:fund-lem}, we have that set of integers between $[N/2,N]$ with no prime factor smaller than $L$ has density $\gg 1/\log L$. Choosing the $C$ sufficiently large, this set has at least $\alpha N$ elements and yet is easily seen to have reciprocal sum at most $<1$. Therefore any fraction $s/t$ constructed as a nontrivial reciprocal sum from this set must have $t\ge L$; this completes the proof.

\bibliographystyle{amsplain0.bst}
\bibliography{main.bib}

\end{document}